\providecommand{\U}[1]{\protect\rule{.1in}{.1in}}
\providecommand{\U}[1]{\protect\rule{.1in}{.1in}}
\providecommand{\U}[1]{\protect\rule{.1in}{.1in}}
\newtheorem{theorem}{Theorem}[section]
\newtheorem{definition}[theorem]{Definition}
\newtheorem{remark}[theorem]{Remark}
\theoremstyle{definition}
\begin{document}
\title{ Weighted Gelfand pairs, weighted spherical Fourier transform and multipliers}
\author{Ass\`ek\`e Y. Tissinam, Abudula\"{i} Issa  and  Yaogan Mensah}

\begin{abstract}
A kind of generalized Gelfand pair is introduced via a Banach algebra consisting of bi-invariant functions in a weighted 
Lebesgue space. The related spherical functions and the spherical Fourier transformation are constructed. The multipliers of the underlying algebra are characterized by this Fourier transformation.
\end{abstract}
\maketitle

Keywords:weighted Gelfand pair, weighted spherical function, 
weighted spherical Fourier transform, multiplier
\newline
2020 Mathematics Subject Classification :  43A20, 43A22, 43A30

\section{Introduction}

Let $G$ be a locally compact group and $K$ be a compact subgroup of $G$. The concept of Gelfand pair is linked to the commutativity of the space of compact supported $K$-bi-invariant functions on $G$ under the convolution product \cite{Dieudonne,Faraut}. It allowed to go beyond the harmonic analysis on locally compact abelian groups and paved the way for the harmonic analysis on a more general commutative spaces \cite{Dijk, Wolf}.  The theory of Gelfand pairs has  many   applications in some fields of mathematical sciences such as harmonic analysis on Riemannian symmetric spaces, number theory and coding theory \cite{Gross}.

The theory of Gelfand pairs, and consequently that of spherical functions,  has undergone several generalizations \cite{Campos, Gallo, Toure}. 

In this article, we sought to generalize the notion of Gelfand pair in the direction of weighted Lebesgue spaces \cite{Kinani}  using a convolution product which depends on the weight. Mainly, we study weighted Gelfand pairs, weighted spherical functions, weighted spherical Fourier transform  and multipliers. 

The notion of multiplier originates from the  theory of summability for Fourier series \cite{Larsen}. It extends to other areas  with  many applications such  as  probability theory, optimization theory, differential equations, theory of signal, acoustics, etc. Some pioneer works in this area are \cite{Helgason, Wang, Wendel 1,Wendel 2}. In 2020, authors in \cite{Issa 1} obtained a theorem of Wendel type related to multipliers for Gelfand pairs.

The rest of the paper is organized  as follows. Section \ref{Preliminaries} is devoted to the recall of some basic facts about the weighted group algebra and Gelfand pairs. In Section \ref{Mainresults}, we state the main results  spread over three subsections.  

\section{Preliminaries}\label{Preliminaries}
\subsection{The weighted group algebra}
Let $G$ be a locally compact group which neutral element is denoted $e$.  Let $K$ be a compact subgroup of $G$.  
In the whole paper, integration of functions on $G$ is taken with respect to a left Haar measure of $G$ while  integration of functions on $K$ is with respect with the normalized Haar measure of $K$. 

On the Lebesgue space 
 $$ L^{1}(G)= \left\{f: G \rightarrow \mathbb{C}: \displaystyle\int_{G}^{}|f(x)|dx < \infty\right\}$$
one defines the norm 
 $$ \lVert f \rVert_{1}=\displaystyle\int_{G} \lvert f(x) \rvert dx$$
  and  the   convolution product  
  
   $$f \ast  g(x) = \int_{G}^{}f(y)g(y^{-1}x)dy.$$
    
  In this article, we refer to this convolution product as the classical convolution product. It is well-known that $\left(L^{1}(G),\|\cdot\|_1,  \ast \right)$ is a Banach algebra \cite{Folland, Larsen}.
\begin{definition}  A  continuous function 	$\omega : G \rightarrow (0,\infty)$ is called a weight on $G$. 
\end{definition}

Let $\omega$ be a  weight on $G$. Consider the weighted Lebesgue space $L^{1}_{\omega}(G)$ defined as 

$$ L^{1}_{\omega}(G):= \left\lbrace f: G \rightarrow \mathbb{C}: \int_{G}|f(x)| \omega(x)dx < \infty\right\rbrace.$$
On this space, one defines the norm	 
	  $ \left\| f \right\|_{1,\omega} = \displaystyle\int_{G}\left| f(x)\right| \omega(x)dx$. The following  generalized convolution product  (\cite{Mahmoodi})   is defined on $L^{1}_{\omega}(G)$   by setting
	 
$$(f \ast _{\omega} g)(x) = \int_{G}^{}f(y)g(y^{-1}x)\dfrac{\omega(y)\omega(y^{-1}x)}{\omega(x)}dy.$$
	
Let us observ that if  $\omega\equiv 1$, then the classical convolution product is  recovered.   
	
The space $\mathcal{L}^{1}_{\omega}(G)=\left(L^{1}_{\omega}(G), \left\| \cdot\right\|_{1,\omega},\ast _{\omega}\right)$ is a faithful Banach algebra  called  {\it the weighted group algebra} in \cite{Mahmoodi}. 

Multipliers of $\mathcal{L}^{1}_{\omega}(G)$ when $G$ is a locally compact abelian group had been studied in \cite{Issa 2}.  There, the authors  defined the multipliers of the space $\mathcal{L}^{1}_{\omega}(G)$ via  the introduction of  the operator $\Gamma^{s}_{\omega}, s\in G$ defined by  $$ \Gamma^{s}_{\omega}f(x) = \dfrac{\tau_{s}\mathcal{M}_{\omega}f(x)}{\omega(x)},$$ where 
  $\mathcal{M}_{\omega}$
   is the multiplication operator defined by 
   
   $$(\mathcal{M}_{\omega}f)(x) = \omega(x)f(x)$$
   and $\tau_{s}$ is the translation operator defined by 
  $$ (\tau_{s}f)(x) = f(s^{-1}x), f\in \mathcal{L}^{1}_{\omega}(G).$$
 The definition of a multiplier is stated as follows.
 \begin{definition}\cite{Issa 2}
 	A map $T:  \mathcal{L}_\omega ^1 (G)\rightarrow \mathcal{L}_\omega ^1 (G)$ is called a {\rm multiplier} if $T$ is linear, bounded and commutes with the operator $\Gamma_\omega^s$ for all $s\in G$. 
 \end{definition}
 We denote by $M(\mathcal{L}_\omega ^1 (G))$  the set of all multipliers on $ \mathcal{L}_\omega ^1 (G) $. The following characterization of multipliers was  obtained. 

\begin{theorem}\cite{Issa 2}
 	Let $G$ be a locally compact abelian group. Let $T:  \mathcal{L}_\omega ^1 (G)\rightarrow \mathcal{L}_\omega ^1 (G)$ be a linear map. Then, $T\in M(\mathcal{L}_\omega ^1 (G))$ if and only if $ T(f \ast_{\omega} g)=Tf \ast_{\omega} g=f \ast_{\omega} Tg $ for all $ f, g \in \mathcal{L}^{1}_{\omega}(G)$. 
 \end{theorem}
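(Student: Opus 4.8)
The plan is to reduce everything to the classical Wendel theorem through the multiplication operator $\mathcal{M}_\omega$. First I would check that $\mathcal{M}_\omega : \mathcal{L}^1_\omega(G) \to L^1(G)$, $f \mapsto \omega f$, is an isometric isomorphism of Banach algebras onto $(L^1(G),\|\cdot\|_1,\ast)$: the isometry $\|\mathcal{M}_\omega f\|_1 = \|f\|_{1,\omega}$ is immediate, and a short computation in which the weight factors telescope gives $\mathcal{M}_\omega(f \ast_\omega g) = (\mathcal{M}_\omega f)\ast(\mathcal{M}_\omega g)$. The very definition $\Gamma^s_\omega f = \tau_s \mathcal{M}_\omega f/\omega$ rewrites as $\mathcal{M}_\omega \Gamma^s_\omega = \tau_s \mathcal{M}_\omega$, so $\mathcal{M}_\omega$ also intertwines each $\Gamma^s_\omega$ with the ordinary left translation $\tau_s$. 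Hence, with $\tilde T := \mathcal{M}_\omega T \mathcal{M}_\omega^{-1}$, the operator $T$ commutes with all $\Gamma^s_\omega$ iff $\tilde T$ commutes with all $\tau_s$, while the identities $T(f\ast_\omega g)=Tf\ast_\omega g=f\ast_\omega Tg$ transform into $\tilde T(\phi\ast\psi)=\tilde T\phi\ast\psi=\phi\ast\tilde T\psi$. This is exactly the classical multiplier characterization on the group algebra of a locally compact abelian group, which I may cite or reprove.

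For a self-contained treatment I would argue directly. The forward implication rests on the representation
\[
f\ast_\omega g=\int_G (\mathcal{M}_\omega f)(y)\,\Gamma^y_\omega g\,dy,
\]
valid as a Bochner integral in $\mathcal{L}^1_\omega(G)$, the integrand having norm $|(\mathcal{M}_\omega f)(y)|\,\|g\|_{1,\omega}$ since $\|\Gamma^y_\omega g\|_{1,\omega}=\|g\|_{1,\omega}$ by translation invariance. A bounded linear operator commutes with a Bochner integral, so $T(f\ast_\omega g)=\int_G (\mathcal{M}_\omega f)(y)\,\Gamma^y_\omega(Tg)\,dy=f\ast_\omega Tg$. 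Since $G$ is abelian, the substitution $z=y^{-1}x$ shows $\ast_\omega$ is commutative, whence $T(f\ast_\omega g)=T(g\ast_\omega f)=g\ast_\omega Tf=Tf\ast_\omega g$, giving the second identity.

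For the converse I would first record the module identity $\Gamma^s_\omega(f\ast_\omega g)=(\Gamma^s_\omega f)\ast_\omega g=f\ast_\omega(\Gamma^s_\omega g)$, which is the image under $\mathcal{M}_\omega$ of the fact that translation commutes with convolution. Assuming the hypothesis, for fixed $f,s$ and arbitrary $g$ one computes
\begin{align*}
(T\Gamma^s_\omega f)\ast_\omega g
&= T\big((\Gamma^s_\omega f)\ast_\omega g\big)
= T\big(f\ast_\omega(\Gamma^s_\omega g)\big)\\
&= Tf\ast_\omega(\Gamma^s_\omega g)
= (\Gamma^s_\omega Tf)\ast_\omega g,
\end{align*}
using the hypothesis twice together with the module identity. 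As $g$ is arbitrary, faithfulness of $\mathcal{L}^1_\omega(G)$ yields $T\Gamma^s_\omega f=\Gamma^s_\omega Tf$, so $T$ commutes with every $\Gamma^s_\omega$.

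The main obstacle lies in the converse. First, the cancellation step — deducing $h=0$ from $h\ast_\omega g=0$ for all $g$ — is exactly where faithfulness of $\mathcal{L}^1_\omega(G)$ enters, and it is most cleanly obtained from a bounded approximate identity pulled back from $L^1(G)$ via $\mathcal{M}_\omega^{-1}$, writing $h=\lim_\alpha h\ast_\omega e_\alpha$. Second, to place $T$ in $M(\mathcal{L}^1_\omega(G))$ one also needs its boundedness; since $\mathcal{L}^1_\omega(G)\cong L^1(G)$ is a semisimple commutative Banach algebra, this follows from the automatic continuity of multipliers on such algebras (unless boundedness is already built into the hypothesis). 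The remaining points — the isomorphism property of $\mathcal{M}_\omega$, Bochner integrability, and commutativity of $\ast_\omega$ — are routine manipulations of the weight.
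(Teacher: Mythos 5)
This statement is quoted in the paper from reference \cite{Issa 2} and is given no proof there, so there is no in-paper argument to compare yours against; I can only assess your proposal on its own terms, and it is correct. Your central device --- that $\mathcal{M}_\omega$ is an isometric algebra isomorphism of $(\mathcal{L}^1_\omega(G),\ast_\omega)$ onto $(L^1(G),\ast)$ intertwining $\Gamma^s_\omega$ with $\tau_s$, so that the whole theorem is the conjugate of the classical Wendel-type characterization --- is exactly the mechanism the present paper itself exploits elsewhere (e.g.\ in the proof of Theorem \ref{injective}, where semisimplicity is transported through $f\mapsto \omega f$), so it is very much in the spirit of the source. Your direct argument is also sound: the Bochner representation $f\ast_\omega g=\int_G(\mathcal{M}_\omega f)(y)\,\Gamma^y_\omega g\,dy$ with $\|\Gamma^y_\omega g\|_{1,\omega}=\|g\|_{1,\omega}$ gives the forward implication, commutativity of $\ast_\omega$ for abelian $G$ gives the symmetric identity, and in the converse the module identity plus faithfulness (which the paper asserts for $\mathcal{L}^1_\omega(G)$, and which you correctly back up with an approximate identity pulled through $\mathcal{M}_\omega^{-1}$) yields $T\Gamma^s_\omega=\Gamma^s_\omega T$. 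You were also right to flag that the theorem's hypothesis is only ``linear map'' while membership in $M(\mathcal{L}^1_\omega(G))$ requires boundedness, so the converse needs the automatic-continuity argument for multipliers of a faithful commutative semisimple Banach algebra; that is the one point a careless reading of the statement would miss, and you handled it.
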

 
 \subsection{{Gelfand pairs}}
In this subsecton,  we recall the notion of   Gelfand pair on a locally compact group and  state some related properties.  For interested readers, we refer  \cite{Dieudonne, Dijk, Faraut, Kangni2} and references therein. Let $G$ be  a locally compact group and let $K$ be a compact subgroup of $G$. A function $f: G\rightarrow\mathbb{C}$ is said to be left $K$-invariant if 
 $$\forall x\in G,  \forall k \in K, f(kx)=f(x).$$
 
  A  right $K$-invariant function is defined in an obvious way. 
 A function $f: G\rightarrow\mathbb{C}$ is said to be  $K$-bi-invariant if  $f$ is both left and right $K$-invariant. That is, 
 	$$\forall x\in G, \forall k_1, k_2 \in K,  f(k_1xk_2)=f(x).$$
 	
 We denote by $C_c(G)$ the set complex continuous functions on $G$  with compact support and  by  $C_{c}(K\setminus G/K)$ the members of $C_c(G)$ which are $K$-bi-invariant. We also denote by  $L^1(K\setminus G/K)$, the set of complex $K$-bi-invariant integrable functions on $G$. A couple  $(G,K)$ is called a Gelfand pair if $C_{c}(K\setminus G/K)$ is a commutative  algebra under the classical convolution product. By a density argument (in fact, $C_{c}(K\setminus G/K)$ is a dense subset of  $L^1(K\setminus G/K)$), it is equivalent to say that $(G,K)$ is called a Gelfand pair if $\left(L^1(K\setminus G/K),\ast\right)$  is commutative.
  
 It is well known in the litterature  that if $f \in C_{c}(G)$, then the  map   $f\longmapsto f^{\sharp}$ where  $f^\sharp$ is defined by 

\begin{equation}\label{re1}
f^{\sharp}(x)= \displaystyle \int_{K}^{}\int_{K}^{}f(k_{1}xk_{2})dk_{1}dk_{2},
\end{equation}
is a projection from $C_{c}(G)$ into $C_{c}(K \setminus G/K)$ (see for instance \cite{Dijk}).
 Moreover,  the  function $f^{\sharp}$ is $ K$-bi-invariant and if $f$ is integrable, then 
$$\displaystyle\int_G  f^{\sharp}(x)dx=\displaystyle\int_G  f(x)dx.$$
  
\section{Main results}\label{Mainresults}

\subsection{Weighted Gelfand pairs}\label{Weighted Gelfand pairs}
We introduce the  concept of { weighted Gelfand pair} as one of the possible generalization   of the notion of (classical) Gelfand pair. 
Let $G$ be a locally compact group   and let  $K$ be  a compact subgroup of $G$. 
We start this section with the following  properties related to the generalized convolution product $\ast_{\omega}$.	

\begin{theorem}
Let $G$ be a locally compact group and let $K$ be a compact subgroup of $G$. Let $\omega$ be a $K$-bi-invariant weight  on $G$.  Then, $\forall f \in C_{c}(K\setminus G/K),\, \forall h\in C_{c}(G)$,
$$(h\ast _{\omega}f)^{\sharp} = h^{\sharp}\ast _{\omega}f.$$
\end{theorem}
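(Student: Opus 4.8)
The plan is to unwind both sides to single integrals over $G$ against the common kernel $f(u^{-1}x)\,\omega(u)\omega(u^{-1}x)/\omega(x)$ and then to show that the remaining weight function built from $h$ agrees on the two sides after averaging over $K$. First I would expand the left-hand side directly from the definitions:
\[
(h\ast_\omega f)^\sharp(x)=\int_K\int_K\int_G h(y)\,f(y^{-1}k_1xk_2)\,\frac{\omega(y)\,\omega(y^{-1}k_1xk_2)}{\omega(k_1xk_2)}\,dy\,dk_1\,dk_2 .
\]
In the inner integral I substitute $y=k_1u$ (the left Haar measure is left invariant, so $dy=du$), giving $y^{-1}k_1xk_2=u^{-1}xk_2$. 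Using that $\omega$ is $K$-bi-invariant together with the $K$-bi-invariance of $f$, the factors $\omega(k_1xk_2)$, $\omega(k_1u)$, $\omega(u^{-1}xk_2)$ and $f(u^{-1}xk_2)$ reduce to $\omega(x)$, $\omega(u)$, $\omega(u^{-1}x)$ and $f(u^{-1}x)$. The integrand then no longer depends on $k_2$, so that integration is trivial, and Fubini (legitimate since $h,f\in C_c(G)$ make everything continuous with compact support) lets me pull the $k_1$-average inside:
\[
(h\ast_\omega f)^\sharp(x)=\int_G\Big(\int_K h(k_1u)\,dk_1\Big)\,f(u^{-1}x)\,\frac{\omega(u)\,\omega(u^{-1}x)}{\omega(x)}\,du .
\]

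At this stage only the \emph{left} $K$-average of $h$ has appeared, whereas the right-hand side $h^\sharp\ast_\omega f$ requires the full bi-invariant projection $h^\sharp$. To bridge this gap I would perform a second substitution $u\mapsto uk_2^{-1}$ in the displayed integral, for each fixed $k_2\in K$. The crucial point is that $K$ is compact, so the modular function of $G$ is identically $1$ on $K$; hence right translation by $k_2^{-1}$ leaves the left Haar measure invariant. Under this substitution $f(u^{-1}x)$, $\omega(u)$ and $\omega(u^{-1}x)$ are unchanged (again by $K$-bi-invariance of $f$ and $\omega$), while $\int_K h(k_1u)\,dk_1$ becomes $\int_K h(k_1uk_2^{-1})\,dk_1$. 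Averaging the resulting identity over $k_2\in K$ and invoking the inversion-invariance of the normalized Haar measure on the compact group $K$ turns $\int_K\int_K h(k_1uk_2^{-1})\,dk_1\,dk_2$ into $h^\sharp(u)$, yielding
\[
(h\ast_\omega f)^\sharp(x)=\int_G h^\sharp(u)\,f(u^{-1}x)\,\frac{\omega(u)\,\omega(u^{-1}x)}{\omega(x)}\,du=(h^\sharp\ast_\omega f)(x).
\]

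The routine parts are the two changes of variable and the Fubini interchanges, all harmless because of compact supports and continuity. The main obstacle is conceptual rather than computational: the $\sharp$-projection symmetrizes on both the left and the right, while the single convolution substitution $y=k_1u$ only supplies the left $K$-average. One must therefore restore the missing right $K$-average through the auxiliary right translation $u\mapsto uk_2^{-1}$, and this step relies essentially on the compactness of $K$ (to force $\Delta_G|_K\equiv 1$) and on the $K$-bi-invariance hypothesis on the weight $\omega$, without which the weight factors $\omega(k_1u)$, $\omega(u^{-1}xk_2)$ and $\omega(k_1xk_2)$ would not collapse to their unweighted positions.
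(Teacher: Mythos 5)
Your proof is correct and follows essentially the same route as the paper's: both arguments reduce the two sides to a common integral by exploiting the $K$-bi-invariance of $f$ and $\omega$ to collapse the weight factors and by changing variables with elements of $K$ to transfer the $K$-averages onto $h$ (your substitutions $y=k_1u$ and $u\mapsto uk_2^{-1}$ are just the inverse of the paper's single change of variable $y\mapsto k_1^{-1}yk_2^{-1}$ performed on the other side of the identity). The only difference worth noting is that you explicitly justify the invariance of the left Haar measure under right translation by elements of $K$ via $\Delta_G|_K\equiv 1$, a point the paper's proof uses silently.
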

\begin{proof}
In one hand, we have
\begin{align*}
(h\ast _{\omega}f)^{\sharp}(x)&=  \int_{K}^{}\int_{K}(h\ast _{\omega}f)(k_{1}xk_{2})dk_{1}dk_{2} \\
					& = \int_{K}^{}\int_{K} \left[ \int_{G}h(y)f(y^{-1}k_{1}xk_{2})\frac{\omega(y)\omega(y^{-1}k_{1}xk_{2})}{w(k_{1}xk_{2})}\ dy \right]  dk_{1}dk_{2}  \\		 
			&=  \int_{G}^{}\int_{K}  \int_{K}h(y)f(y^{-1}k_{1}x)\frac{\omega(y)\omega(y^{-1}k_{1}x)}{\omega(x)}dy dk_{1}dk_{2}(\mbox{Fubini's theorem}).   
\end{align*}
	In the other hand,
\begin{align*}
			(h^{\sharp}\ast _{\omega}f)(x)&=  \int_{G}h^{\sharp}(y)f(y^{-1}x)\frac{\omega(y)\omega(y^{-1}x)}{\omega(x)}dy \\
			& = \int_{G} \left[ \int_{K} \int_{K}h(k_{1}yk_{2})dk_{1}dk_{2} \right] f(y^{-1}x) \frac{\omega(y)\omega(y^{-1}x)}{\omega(x)}dy. 
			\end{align*}
			
 Making the change of variable $y\longrightarrow k_{1}^{-1}yk_{2}^{-1}$, we have
 \begin{align*}
(h^{\sharp}\ast _{\omega}f)(x)	&=  \int_{G}  \int_{K} \int_{K}h(y)  f(k_{2}y^{-1}k_{1}x) \frac{\omega(k_{1}^{-1}yk_{2}^{-1})\omega(k_{2}y^{-1}k_{1}x)}{\omega(x)}dydk_{1}dk_{2} \\ 	
		&= \int_{G}  \int_{K} \int_{K}h(y)  f(y^{-1}k_{1}x)\frac{\omega(y)\omega(y^{-1}k_{1}x)}{\omega(x)}dydk_{1}dk_{2}.
	\end{align*}
Thus, 	$(h\ast _{\omega}f)^{\sharp} = h^{\sharp}\ast _{\omega}f$.
\end{proof}
\begin{theorem}
Let $G$ be a locally compact group and let $K$ be a compact subgroup of $G$. Let $\omega$ be a $K$-bi-invariant weight  on $G$. Let $f \in C_{c}(K\setminus G/K)$ and $h\in C_{c}(G)$. If $h$ is  left  $K$-invariant, then 
$$
(h\ast _{\omega}f)^{\sharp} = h\ast _{\omega}f^{\sharp}.
$$
\end{theorem}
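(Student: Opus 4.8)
The plan is to reduce the identity to the one just established and then to a short change-of-variables computation. First I would record the elementary but crucial observation that since $f\in C_c(K\setminus G/K)$ is already $K$-bi-invariant, the averaging operator acts trivially on it: $f^{\sharp}=f$. Hence the right-hand side satisfies $h\ast_\omega f^{\sharp}=h\ast_\omega f$, and the claim is equivalent to showing that $h\ast_\omega f$ is itself $K$-bi-invariant, i.e. $(h\ast_\omega f)^{\sharp}=h\ast_\omega f$.

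Next I would invoke the previous theorem, which gives $(h\ast_\omega f)^{\sharp}=h^{\sharp}\ast_\omega f$ for every $h\in C_c(G)$ and $f\in C_c(K\setminus G/K)$. Combining this with the reduction above, it remains only to prove that $h^{\sharp}\ast_\omega f=h\ast_\omega f$ whenever $h$ is left $K$-invariant. Because $h$ is left $K$-invariant, the left averaging in the definition of $h^{\sharp}$ disappears and $h^{\sharp}(y)=\int_K h(yk)\,dk$, the $k_1$-integral contributing only the factor $\int_K dk_1=1$.

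The core computation is then to insert this expression into $h^{\sharp}\ast_\omega f$, apply Fubini's theorem to pull the $K$-integral outside, and perform the substitution $y\mapsto yk^{-1}$ in the inner integral. Here I would use three facts in tandem: that right translation by an element of the compact group $K$ preserves the left Haar measure (the modular function is trivial on $K$, since $\Delta(K)$ is a compact subgroup of $(0,\infty)$, hence $\{1\}$); that $\omega$ is $K$-bi-invariant, so both weight factors $\omega(y)$ and $\omega(y^{-1}x)$ are unchanged, using the appropriate side of the invariance; and that $f$ is left $K$-invariant, so $f(y^{-1}x)$ is unchanged. After the substitution the integrand no longer depends on $k$, and the normalized $K$-integral collapses to $1$, leaving exactly $(h\ast_\omega f)(x)$.

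The main obstacle, as in the preceding theorem, is the careful bookkeeping of the weight quotient $\omega(y)\omega(y^{-1}x)/\omega(x)$ under the change of variables: one must check that each factor is invariant under the relevant one-sided $K$-translation using the correct side of the bi-invariance of $\omega$, and that the Haar measure is genuinely preserved even though $G$ is not assumed unimodular. A direct alternative, bypassing the previous theorem, is to expand $(h\ast_\omega f)^{\sharp}$ from the definition, use $K$-bi-invariance of $\omega$ and of $f$ to remove the $k_2$-average, and then run the analogous substitution $y\mapsto k_1 y$ together with left $K$-invariance of $h$, $f$ and $\omega$; this yields $(h\ast_\omega f)^{\sharp}=h\ast_\omega f=h\ast_\omega f^{\sharp}$ directly.
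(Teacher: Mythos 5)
Your proof is correct, but it is organized differently from the paper's. The paper proves the identity by a single direct computation: it expands $f^{\sharp}$ inside $h\ast_{\omega}f^{\sharp}$ as a double average over $K$, performs the left substitution $y\mapsto k_{1}^{-1}y$, and uses the $K$-bi-invariance of $f$ and $\omega$ together with the left $K$-invariance of $h$ to reassemble the integrand into $(h\ast_{\omega}f)^{\sharp}(x)$; it never exploits the fact that $f^{\sharp}=f$. You instead make that observation up front, reduce the statement to the bi-invariance of $h\ast_{\omega}f$, i.e.\ to $(h\ast_{\omega}f)^{\sharp}=h\ast_{\omega}f$, then reuse the preceding theorem to rewrite the left-hand side as $h^{\sharp}\ast_{\omega}f$, and finish by showing $h^{\sharp}\ast_{\omega}f=h\ast_{\omega}f$ via the right substitution $y\mapsto yk^{-1}$. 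What your route buys is economy (the previous lemma does half the work) and a cleaner conceptual statement; what it costs is that the substitution $y\mapsto yk^{-1}$ requires right-translation invariance of the \emph{left} Haar measure under elements of $K$, a point the paper's left-only substitution avoids entirely --- and you handle it correctly by noting that $\Delta(K)$ is a compact subgroup of $(0,\infty)$, hence trivial. Your bookkeeping of which side of the bi-invariance of $\omega$ is used at each factor is accurate throughout, and the ``direct alternative'' you sketch at the end is essentially the paper's own computation (applied to $(h\ast_{\omega}f)^{\sharp}$ rather than to $h\ast_{\omega}f^{\sharp}$), so either version of your argument is acceptable.
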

\begin{proof}
\begin{align*}
			(h\ast _{\omega}f^{\sharp})(x)&=    \int_{G}h(y)f^{\sharp}(y^{-1}x)\frac{\omega(y)\omega(y^{-1}x)}{\omega(x)}dy \\
			& = \int_{G}h(y) \left[ \int_{K} \int_{K}f(k_{1}y^{-1}xk_{2})dk_{1}dk_{2} \right]  \frac{\omega(y)\omega(y^{-1}x)}{\omega(x)}dy  \\
			&=    \int_{G}  \int_{K} \int_{K}h(y)  f(k_{1}y^{-1}xk_{2}) \frac{\omega(y)\omega(y^{-1}x)}{\omega(x)}dydk_{1}dk_{2}  
			\end{align*} 
Making the change of variable     $y\longrightarrow  k_{1}^{-1}y$, we have
\begin{align*}
(h\ast _{\omega}f^{\sharp})(x)&=    \int_{G}  \int_{K} \int_{K}h(k^{-1}_{1}y)  f(y^{-1}k_{1}x) \frac{\omega(k^{-1}_{1}y)\omega(y^{-1}k_{1}x)}{\omega(x)}dydk_{1}dk_{2}  \\ 	
			&=   \int_{G}  \int_{K} \int_{K}h(k^{-1}_{1}y)  f(y^{-1}k_{1}x) \frac{\omega(y)\omega(y^{-1}k_{1}x)}{\omega(x)}dydk_{1}dk_{2}\\  
	&=   \int_{G}  \int_{K} \int_{K}h(y)  f(y^{-1}x) \frac{\omega(y)\omega(y^{-1}x)}{\omega(x)}dydk_{1}dk_{2}\\
		&=   \int_{G}  \int_{K} \int_{K}h(y)  f(k_1y^{-1}xk_2) \frac{\omega(y)\omega(k_1y^{-1}xk_2)}{\omega(x)}dydk_{1}dk_{2}\\	
			&=  \int_{K} \int_{K}h\ast _{\omega}f(k_1y^{-1}xk_2)dk_{1}dk_{2}\\			
	&=(h\ast _{\omega}f)^{\sharp}(x).
	\end{align*} 
	Thus, $ (h\ast _{\omega}f)^{\sharp} = h\ast _{\omega}f^{\sharp}$.
	\end{proof}
Let $\theta$ be  a continuous involutive automorphism of $G$. For a function $f$ defined on $G$, set  $f^\theta (x)=f(\theta (x))$.
\begin{theorem}\label{theta}
Let $G$ be a locally compact group. Let $\omega$ be a weight  on $G$.
If $\omega^{\theta} =\omega$, then $\forall f,h \in C_{c}(G)$, $$f^{\theta}\ast _{\omega}h^{\theta} = (f \ast _{\omega}h)^{\theta}.$$
\end{theorem}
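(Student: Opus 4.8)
The plan is to compute the left-hand side directly and reduce it to the right-hand side by a single change of variable governed by the automorphism $\theta$. First I would write
$$(f^{\theta}\ast_{\omega}h^{\theta})(x)=\int_{G}f(\theta(y))\,h(\theta(y^{-1}x))\,\frac{\omega(y)\omega(y^{-1}x)}{\omega(x)}\,dy,$$
and then use that $\theta$ is an automorphism to replace $\theta(y^{-1}x)$ by $\theta(y)^{-1}\theta(x)$ inside $h$.

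The key step is to absorb $\theta$ into every occurrence of $\omega$ using the hypothesis $\omega^{\theta}=\omega$. Since $\omega(y)=\omega(\theta(y))$, $\omega(x)=\omega(\theta(x))$ and $\omega(y^{-1}x)=\omega(\theta(y)^{-1}\theta(x))$, the whole integrand takes the form $\Phi(\theta(y))$, where $\Phi(u)=f(u)\,h(u^{-1}\theta(x))\,\omega(u)\omega(u^{-1}\theta(x))/\omega(\theta(x))$. I would then perform the substitution $u=\theta(y)$ and recognize $\int_{G}\Phi(u)\,du$ as precisely $(f\ast_{\omega}h)(\theta(x))=(f\ast_{\omega}h)^{\theta}(x)$, which is the desired right-hand side.

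The main obstacle is to justify that the substitution $u=\theta(y)$ leaves the left Haar measure unchanged. Because $\theta$ is a continuous automorphism, the pushforward of a left Haar measure under $\theta$ is again left-invariant, hence equals $\mathrm{mod}(\theta)\,dy$ for some constant $\mathrm{mod}(\theta)>0$, and the assignment $\theta\mapsto\mathrm{mod}(\theta)$ is multiplicative with $\mathrm{mod}(\mathrm{id})=1$. Here the hypothesis that $\theta$ is \emph{involutive} is essential: from $\theta\circ\theta=\mathrm{id}$ one gets $\mathrm{mod}(\theta)^{2}=\mathrm{mod}(\mathrm{id})=1$, and since $\mathrm{mod}(\theta)>0$ this forces $\mathrm{mod}(\theta)=1$. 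Consequently the change of variable is measure-preserving, no spurious constant appears, and the identity $f^{\theta}\ast_{\omega}h^{\theta}=(f\ast_{\omega}h)^{\theta}$ follows.
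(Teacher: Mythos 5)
Your proof is correct and follows essentially the same route as the paper's: expand the left-hand side, absorb $\theta$ into $\omega$ using $\omega^{\theta}=\omega$ and the automorphism property $\theta(y^{-1}x)=\theta(y)^{-1}\theta(x)$, then substitute $u=\theta(y)$. The only difference is that you explicitly justify that this substitution preserves the left Haar measure via $\mathrm{mod}(\theta)^{2}=\mathrm{mod}(\theta\circ\theta)=1$, a point the paper's proof passes over silently; this is a genuine (and welcome) tightening rather than a different approach.
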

\begin{proof}	
\begin{align*}
	 f^{\theta}\ast_{\omega}h^{\theta}(x) & = \displaystyle\int_{G}f^{\theta}(y)h^{\theta}(y^{-1}x)\dfrac{\omega(y)\omega(y^{-1}x)}{\omega(x)}dy \\
	 & =  \int_{G}f^{\theta}(y)h^{\theta}(y^{-1}x)\dfrac{\omega^{\theta}(y)\omega^{\theta}(y^{-1}x)}{\omega^{\theta}(x)}dy \\
	 &= \int_{G}^{}f(\theta(y))h(\theta(y^{-1}x))\dfrac{\omega(\theta(y))\omega(\theta(y^{-1}x)}{\omega(\theta(x))}dy \\
	 &= \int_{G}^{}f(\theta(y))h(\theta(y^{-1})\theta(x))\dfrac{\omega(\theta(y))\omega(\theta(y^{-1})\theta(x))}{\omega(\theta(x))}dy \\
	 &= \int_{G}^{}f(\theta(y))h([\theta(y)]^{-1}\theta(x))\dfrac{\omega(\theta(y))\omega([\theta(y)]^{-1}\theta(x))}{\omega(\theta(x))}dy.
\end{align*}
Making the change of variable  $s=\theta(y)$, we have
\begin{align*}
f^{\theta}\ast_{\omega}h^{\theta}(x) & = \int_{G}^{}f(s)h(s^{-1}\theta(x))\dfrac{\omega(s)\omega(s^{-1}\theta(x))}{\omega(\theta(x))}ds\\
	 &= \left( f \ast_{\omega}h\right) \theta(x)\\
	 &= \left( f \ast_{\omega}h\right)^{\theta} (x).
\end{align*}	
Thus, $f^{\theta}\ast _{\omega}h^{\theta} = (f \ast _{\omega}h)^{\theta}$.
\end{proof}
\begin{remark}{ \rm
If for instance we choose $\theta$ as the inverse map of $G$, that is $\theta (x)=x^{-1}$, then the condition $\omega^\theta=\omega$ is satisfied by symmetric weights. 
}\end{remark}
 
Set $\mathcal{L}^{1}_{\omega}(K\setminus G/K):=\{f\in \mathcal{L}^{1}_{\omega}(G): f \, \text{is K-bi-invariant} \}.$
When equipped with the norm $\lVert \cdot \lVert_{1,\omega}$ and  the generalized convolution $\ast_{\omega}$, the space $\mathcal{L}^{1}_{\omega}(K\setminus G/K)$ is  Banach algebra (non necessarily commutative). 
\begin{theorem}\label{dense}
Let $G$ be a locally compact group and let $K$ be a compact subgroup of $G$. Let $\omega$ be a $K$-bi-invariant weight  on $G$. Then, 
	the space $C_{c}(K\setminus G/K)$ is dense in $\mathcal{L}^{1}_{\omega}(K\setminus G/K)$.
\end{theorem}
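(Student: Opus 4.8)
The plan is to push forward the classical density of $C_c(G)$ in the weighted Lebesgue space $L^1_\omega(G)$ through the bi-invariantizing projection $\sharp$ of \eqref{re1}. First I would record that $C_c(G)$ is dense in $L^1_\omega(G)$: since $\omega$ is continuous and strictly positive, the multiplication map $\mathcal{M}_\omega : f\mapsto \omega f$ is an isometric isomorphism of $L^1_\omega(G)$ onto $L^1(G)$ which carries $C_c(G)$ onto $C_c(G)$; as $C_c(G)$ is dense in $L^1(G)$ by the standard theory, the claim follows.

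The crucial step is to show that $\sharp$ extends to a norm-nonincreasing linear map on $L^1_\omega(G)$, indeed a contraction whose image consists of bi-invariant functions. For $h\in C_c(G)$ I would estimate, using the triangle inequality together with the $K$-bi-invariance $\omega(x)=\omega(k_1xk_2)$,
\[
\|h^\sharp\|_{1,\omega}\le \int_G\!\int_K\!\int_K |h(k_1xk_2)|\,\omega(k_1xk_2)\,dk_1\,dk_2\,dx .
\]
Because $K$ is compact, its modular function is trivial, so right translation by $k_2\in K$ preserves the left Haar measure of $G$ (left translation by $k_1$ does as well); hence for each fixed $k_1,k_2\in K$ the substitution $u=k_1xk_2$ gives $\int_G |h(k_1xk_2)|\,\omega(k_1xk_2)\,dx=\|h\|_{1,\omega}$. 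Integrating over $K\times K$ with its normalized Haar measure then yields $\|h^\sharp\|_{1,\omega}\le \|h\|_{1,\omega}$. Thus $\sharp$ is bounded for $\|\cdot\|_{1,\omega}$ on the dense subspace $C_c(G)$ and extends by continuity to all of $L^1_\omega(G)$, remaining a contraction with range inside $\mathcal{L}^1_\omega(K\setminus G/K)$.

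With these two facts the density follows quickly. Given $f\in \mathcal{L}^1_\omega(K\setminus G/K)$, choose $g_n\in C_c(G)$ with $\|f-g_n\|_{1,\omega}\to 0$; then $g_n^\sharp\in C_{c}(K\setminus G/K)$, and since $f$ is $K$-bi-invariant we have $f^\sharp=f$, whence
\[
\|f-g_n^\sharp\|_{1,\omega}=\|(f-g_n)^\sharp\|_{1,\omega}\le \|f-g_n\|_{1,\omega}\longrightarrow 0 .
\]
I expect the main obstacle to be the measure-theoretic bookkeeping in the contraction estimate, namely verifying that both the left and the right $K$-translations leave the left Haar measure invariant (the latter relying on the triviality of the modular function on the compact group $K$) and combining this correctly with the $K$-bi-invariance of the weight $\omega$; the remaining approximation argument is routine.
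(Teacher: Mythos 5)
Your proof is correct, but it takes a genuinely different route from the paper's. The paper's argument is a two-line transfer: for $f\in\mathcal{L}^1_\omega(K\setminus G/K)$ it observes that $f\omega\in L^1(K\setminus G/K)$, invokes the (already assumed) density of $C_c(K\setminus G/K)$ in $L^1(K\setminus G/K)$ to get $g_n\to f\omega$, and sets $f_n=g_n/\omega$, which stays in $C_c(K\setminus G/K)$ precisely because $\omega$ is continuous, positive and $K$-bi-invariant; the map $\mathcal{M}_\omega$ is an isometry, so $\|f_n-f\|_{1,\omega}=\|g_n-f\omega\|_1\to 0$. You instead assume only the more elementary density of $C_c(G)$ in $L^1(G)$, and recover the bi-invariant statement by proving that the averaging projection $\sharp$ is a $\|\cdot\|_{1,\omega}$-contraction and applying it to an unrestricted approximating sequence, using $f^\sharp=f$. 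This is longer but more self-contained: it effectively re-proves the unweighted density of $C_c(K\setminus G/K)$ in $L^1(K\setminus G/K)$ (which the paper takes as known) in the weighted setting, and the contraction estimate for $\sharp$ is of independent use. Both arguments hinge on the $K$-bi-invariance of $\omega$ — the paper to keep $g_n/\omega$ bi-invariant, you to pull $\omega(x)$ inside as $\omega(k_1xk_2)$ before the change of variables. Two small points to tighten: the fact you need is that the modular function \emph{of $G$} restricted to $K$ is trivial (because $\Delta_G(K)$ is a compact subgroup of $(0,\infty)$), not the modular function of $K$ itself; and in the last display you should either define $\sharp$ on all of $L^1_\omega(G)$ directly by the integral formula (so that $f^\sharp=f$ pointwise for bi-invariant $f$) or note that the identity $f^\sharp=f$ survives the extension by continuity.
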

\begin{proof}
Let $f$ be in $\mathcal{L}^{1}_{\omega}(K\setminus G/K)$. Then,  $f\omega \in L^1(K\setminus G/K)$. Since 	$C_{c}(K\setminus G/K)$ is dense in $L^1(K\setminus G/K)$, then there exists a sequence $(g_n)$  of elements of $C_{c}(K\setminus G/K)$ such that $\lim\limits_{n \to +\infty}{\|g_n-f\omega \|}_{{L^{1}(K\setminus G/K)}}=0.$ Let us  set $f_n=\dfrac{g_n}{\omega}$. Then,  $(f_n)_{n\in\mathbb{N}} \subset C_{c}(K\setminus G/K)$ and 
$$
\lim_{n \to +\infty}{\lVert f_n - f \rVert}_{\mathcal{L}^1_{\omega}(K\setminus G/K)}= \lim_{n \to +\infty}{\lVert g_n - f\omega \rVert_{L^{1}(K\setminus G/K)}}
=0.
$$
Thus,  $C_{c}(K\setminus G/K)$ is dense in $\mathcal{L}^{1}_{\omega}(K\setminus G/K)$.
\end{proof}
The following definition is one of the main ingredients of the article. It is concerned by a generalization of the notion of  Gelfand pair which we call a weighted Gelfand pair. 
\begin{definition}
Let $G$ be a locally compact group. Let $K$ be a compact subgroup of $G$. Let $\omega$ be a $K$-bi-invariant weight  on $G$.  The triple $(G,K,\omega)$ is called   a {\rm weighted Gelfand pair}   if the space $\left(\mathcal{L}^{1}_{\omega}(K \setminus G/ K), \ast_{\omega}\right)$ is  commutative.  
\end{definition}

\begin{remark}{\rm
Let us notice that if $\omega\equiv 1$, then one  recovers the classical notion of Gelfand pair.}
\end{remark}

For a complex  valued function  $f$ defined on $G$, set 
$\check{f} (x)=f(x^{-1})$. We state the following result. 
\begin{theorem}\label{aut}
Let $G$ be a locally compact group and let $K$ be a compact subgroup of $G$. If there exists an involutive continuous  automorphism $\theta$ of $G$ such that for all $x \in G$, $\theta(x) \in Kx^{-1}K$,  then for each  integrable $K$-bi-invariant function $f$ on $G$, we have $ f^{\theta}= \check{f}.$
	\end{theorem}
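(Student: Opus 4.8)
The plan is to unwind the two definitions and apply the $K$-bi-invariance of $f$ directly; no machinery beyond the hypothesis is required. First I would fix an arbitrary $x\in G$. By assumption $\theta(x)\in Kx^{-1}K$, so there exist elements $k_{1},k_{2}\in K$ (a priori depending on $x$) with $\theta(x)=k_{1}x^{-1}k_{2}$. Evaluating the definition of $f^{\theta}$ at $x$ then gives $f^{\theta}(x)=f(\theta(x))=f(k_{1}x^{-1}k_{2})$.

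Next I would invoke that $f$ is $K$-bi-invariant, that is, $f(k_{1}yk_{2})=f(y)$ for every $y\in G$ and all $k_{1},k_{2}\in K$. Applying this with $y=x^{-1}$ yields $f(k_{1}x^{-1}k_{2})=f(x^{-1})=\check{f}(x)$. Chaining the two identities gives $f^{\theta}(x)=\check{f}(x)$, and since $x\in G$ was arbitrary, $f^{\theta}=\check{f}$, as claimed.

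I expect essentially no obstacle here; the one point deserving a moment's care is that the pair $(k_{1},k_{2})$ realizing the coset membership $\theta(x)\in Kx^{-1}K$ may vary with $x$. This causes no trouble, because the bi-invariance identity holds simultaneously for every pair in $K\times K$, so for each fixed $x$ it suffices to select any one admissible pair. It is worth noting that neither the involutivity of $\theta$ nor its being an automorphism is actually used in this pointwise computation—only the coset condition together with the bi-invariance of $f$ enters. Those additional hypotheses presumably serve to guarantee, in the applications to come, that $f^{\theta}$ is again an integrable $K$-bi-invariant function, so that the identity $f^{\theta}=\check{f}$ may be read as an equality in $\mathcal{L}^{1}_{\omega}(K\setminus G/K)$.
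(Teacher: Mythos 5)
Your proposal is correct and follows exactly the same route as the paper's proof: fix $x$, write $\theta(x)=k_{1}x^{-1}k_{2}$ using the coset hypothesis, and apply $K$-bi-invariance to conclude $f^{\theta}(x)=f(x^{-1})=\check{f}(x)$. Your additional remark that involutivity and the automorphism property are not actually used in this computation is accurate and a worthwhile observation.
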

\begin{proof}
	Let $x\in G$. By definition,    $f^{\theta}(x) = f(\theta(x))$. If  $\theta(x) \in Kx^{-1}K$, then  there exists   $ k_{1}, k_{2}$ in   $K$  such that   $\theta(x) = k_{1}x^{-1}k_{2}$. Therefore, 
	\begin{eqnarray*}
		f^{\theta}(x)  =  f(\theta(x))
		 =  f(k_{1}x^{-1}k_{2})
		=  f(x^{-1})=\check{f}(x).
	\end{eqnarray*}
	Thus, $f^{\theta} = \check{f}.$
\end{proof}

The following result state a necessary condition to have a weighted Gelfand pair.
\begin{theorem}\label{rap} Let $G$ be a locally compact group  and let be $K$ a compact subgroup of $G$. Let 
$\omega$ be a  $K$-bi-invariant weight such that $\omega^{\theta}=\omega$. If there exists a continuous involutive automorphism $\theta$ of $ G$ such that $ \theta(x) \in Kx^{-1}K$ for all $x\in G$, then 
$(G,K, \omega) $ is a weighted Gelfand pair.
\end{theorem}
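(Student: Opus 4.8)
The plan is to reduce commutativity of $\ast_\omega$ on $K$-bi-invariant functions to the interplay between the automorphism $\theta$ (which governs $\ast_\omega$ through Theorem \ref{theta}) and the inversion $x\mapsto x^{-1}$ (which is the genuine source of commutativity). Throughout I write $f^{\vee}(x)=f(x^{-1})=\check f(x)$. First I would record that the hypotheses force $\omega$ to be symmetric: since $\omega$ is $K$-bi-invariant and $\theta(x)\in Kx^{-1}K$, the pointwise computation underlying Theorem \ref{aut} (which uses no integrability) gives $\omega^{\theta}=\omega^{\vee}$, and combined with the standing assumption $\omega^{\theta}=\omega$ this yields $\omega^{\vee}=\omega$.

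Next I would establish the anti-automorphism identity $(f\ast_\omega g)^{\vee}=g^{\vee}\ast_\omega f^{\vee}$ for all $f,g\in\mathcal{L}^{1}_{\omega}(G)$. This is a direct computation: starting from $(g^{\vee}\ast_\omega f^{\vee})(x)$, the left translation $y\mapsto xy$ (which preserves the left Haar measure) turns the integrand into that of $(f\ast_\omega g)(x^{-1})$ up to comparing the weight factor $\frac{\omega(xy)\omega(y^{-1})}{\omega(x)}$ with $\frac{\omega(y)\omega(y^{-1}x^{-1})}{\omega(x^{-1})}$, and the symmetry $\omega^{\vee}=\omega$ makes these two factors coincide. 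Equivalently, and more transparently, the multiplication map $\Phi(h)=\omega h$ is an isometric isomorphism of $(\mathcal{L}^{1}_{\omega}(G),\ast_\omega)$ onto $(L^{1}(G),\ast)$ because $(f\ast_\omega g)\omega=(\omega f)\ast(\omega g)$; since inversion is always an anti-automorphism of the classical convolution and $\omega^{\vee}=\omega$, transporting this back through $\Phi$ produces the identity.

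With these two facts in hand I would run the main argument on $C_{c}(K\setminus G/K)$. For such $f,g$ the product $f\ast_\omega g$ again lies in $C_{c}(K\setminus G/K)$, so Theorem \ref{aut} applies to $f$, to $g$, and to $f\ast_\omega g$, giving $f^{\theta}=f^{\vee}$, $g^{\theta}=g^{\vee}$ and $(f\ast_\omega g)^{\theta}=(f\ast_\omega g)^{\vee}$; Theorem \ref{theta} (using $\omega^{\theta}=\omega$) gives $(f\ast_\omega g)^{\theta}=f^{\theta}\ast_\omega g^{\theta}$. Chaining these yields $(f\ast_\omega g)^{\vee}=f^{\vee}\ast_\omega g^{\vee}$, and comparing with the anti-automorphism identity $(f\ast_\omega g)^{\vee}=g^{\vee}\ast_\omega f^{\vee}$ gives $f^{\vee}\ast_\omega g^{\vee}=g^{\vee}\ast_\omega f^{\vee}$. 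Since $h\mapsto h^{\vee}$ is an involutive bijection of $C_{c}(K\setminus G/K)$, replacing $f,g$ by $f^{\vee},g^{\vee}$ produces $f\ast_\omega g=g\ast_\omega f$ on $C_{c}(K\setminus G/K)$. Finally I would invoke Theorem \ref{dense} together with the continuity of $\ast_\omega$ on the Banach algebra $\mathcal{L}^{1}_{\omega}(K\setminus G/K)$ to propagate commutativity to the whole algebra, concluding that $(G,K,\omega)$ is a weighted Gelfand pair.

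The main obstacle is the second step. The automorphism identity of Theorem \ref{theta} alone cannot produce commutativity, since applying $\theta$ twice to $f\ast_\omega g$ merely returns $f\ast_\omega g$; one genuinely needs the order-reversing inversion identity $(f\ast_\omega g)^{\vee}=g^{\vee}\ast_\omega f^{\vee}$. Unlike the classical convolution, for which this holds unconditionally, the twisted product $\ast_\omega$ satisfies it only when $\omega$ is symmetric, so the crux of the proof is the observation that the three hypotheses ($K$-bi-invariance of $\omega$, $\omega^{\theta}=\omega$, and $\theta(x)\in Kx^{-1}K$) conspire to force $\omega^{\vee}=\omega$, after which the weight factors cancel exactly.
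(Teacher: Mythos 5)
Your proposal is correct and follows essentially the same route as the paper: deduce $\omega(x^{-1})=\omega(x)$ from the hypotheses, prove the order-reversing identity $(f\ast_\omega g)\check{\ }=\check g\ast_\omega\check f$ using that symmetry, and combine it with Theorems \ref{theta} and \ref{aut} to force commutativity. The only differences are cosmetic: you derive the symmetry of $\omega$ directly from $\theta(x)\in Kx^{-1}K$ and the $K$-bi-invariance (which is in fact cleaner than the paper's chain $\omega(\theta^{-1}(x))=\omega(\theta(x^{-1}))$), and you make the final density step via Theorem \ref{dense} explicit where the paper leaves it implicit.
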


\begin{proof}
By Theorem \ref{theta}, one has $\forall f, h \in \mathcal{L}_{\omega}^{1}(K \setminus G/K), f^{\theta}\ast_{\omega}h^{\theta}  = \left( f \ast_{\omega}h\right)^{\theta}$. Since
$\omega^{\theta}= \omega$    and  $\theta^{2}$ is the identity map of $G$, then $\forall x\in G$,  $$\omega(x) =   \omega(\theta(x))
 =  \omega(\theta^{-1}(x))
=  \omega(\theta(x^{-1}))
 =  \omega(x^{-1}).$$
That is to say $\omega$ is symmetric. Furthermore, 
\begin{align*}
(\check{f}\ast_{\omega}\check{h})\check{}(x) & =  (\check{f}\ast_{\omega}\check{h})(x^{-1})\\
& =  \int_{G}\check{f}(y)\check{h}(y^{-1}x^{-1})\dfrac{\omega(y)\omega(y^{-1}x^{-1})}{\omega(x^{-1})}dy\\
& = \int_{G}\check{f}(y)\check{h}\left[ (xy)^{-1}\right] \dfrac{\omega(y)\omega\left[ (xy)^{-1}\right]}{\omega(x^{-1})}dy\\
& =  \int_{G}\check{f}(y)h (xy) \dfrac{\omega(y)\omega (xy)}{\omega(x)}dy\\
& =  \int_{G}f(y^{-1})h (xy) \dfrac{\omega(y)\omega (xy)}{\omega(x)}dy\\
& =  (h\ast_{\omega}f)(x).
\end{align*}

Then, $ \left( \check{f}\ast_{\omega}\check{h}\right)\check{{}} = h\ast_{\omega}f$.  Therefore, $\check{f}\ast_{\omega}\check{h} = (h\ast_{\omega}f)\check{{}}$. By Theorem \ref{aut}, we have
$$f^{\theta}\ast_{\omega}h^{\theta} = (h\ast_{\omega}f)^{\theta}.$$
However, $\check{f}\ast_{\omega}\check{h} = f^{\theta}\ast_{\omega}h^{\theta} = (f\ast_{\omega}h)^{\theta}$ by Theorem \ref{theta}.
It follows that  $(h \ast_{\omega}f)^{\theta}=(f \ast_{\omega} h)^{\theta}$. Therefore,  $h \ast_{\omega}f=f \ast_{\omega} h$. Thus,
$(G, K, \omega)$ is a weighted Gelfand pair.
\end{proof}

Set $\check{\omega}(x)= \omega(x^{-1})$. Then,  
$\mathcal{M}_{\omega\check{\omega}}f(x)=f(x)\omega(x^{-1})\omega(x),   x\in G$.
\begin{theorem} Let $G$ be a locally compact group and let $K$ be a compact subgroup of $G$. Let $\omega$ be a $K$-bi-invariant weight on $G$ with $\omega(e)=1$. 
		If $(G,K,\omega)$ is a weighted Gelfand pair, then $\forall f\in C_{c}(K\setminus G/K)$,
 \begin{equation}\label{implies unimodularity}		 \int_{G}\mathcal{M}_{\omega\check{\omega}}f(x)dx= \int_{G}\mathcal{M}_{\omega\check{\omega}}f(x^{-1})dx.
\end{equation}	
\end{theorem}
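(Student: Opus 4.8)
The plan is to read off the desired identity \eqref{implies unimodularity} as the commutativity relation $f\ast_\omega g=g\ast_\omega f$ evaluated at the neutral element $e$, after replacing the (non $\omega$-integrable) constant function $1$ by a compactly supported bi-invariant substitute.

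First I would record, for any $g\in\mathcal{L}^1_\omega(K\setminus G/K)$ and using $\omega(e)=1$, the two values
$$(f\ast_\omega g)(e)=\int_G f(y)g(y^{-1})\omega(y)\omega(y^{-1})\,dy,\qquad (g\ast_\omega f)(e)=\int_G g(y)f(y^{-1})\omega(y)\omega(y^{-1})\,dy.$$
Because $f,g\in C_c(G)$ and $\omega$ is continuous and strictly positive, both $f\ast_\omega g$ and $g\ast_\omega f$ are continuous functions, so the almost-everywhere identity $f\ast_\omega g=g\ast_\omega f$ furnished by the weighted Gelfand pair hypothesis holds everywhere, in particular at $e$.

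Next I would manufacture a suitable $g$. Set $C:=\mathrm{supp}(f)\cup(\mathrm{supp}(f))^{-1}$; since $f$ is $K$-bi-invariant, $\mathrm{supp}(f)$ and its inverse are $K$-bi-invariant, hence so is the compact set $C$. By Urysohn's lemma choose $\tilde g\in C_c(G)$ with $0\le\tilde g\le1$ and $\tilde g\equiv1$ on $C$, and put $g=\tilde g^{\sharp}$. The projection property recalled at \eqref{re1} gives $g\in C_c(K\setminus G/K)\subset\mathcal{L}^1_\omega(K\setminus G/K)$, and the $K$-bi-invariance of $C$ yields $g\equiv1$ on $C$ (each $k_1xk_2$ with $x\in C$ still lies in $C$).

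Finally I would use the compact support of $f$ to delete $g$ from both integrals. In $(f\ast_\omega g)(e)$ the integrand is supported in $\mathrm{supp}(f)$, where $y^{-1}\in C$ forces $g(y^{-1})=1$, so this value equals $\int_G\mathcal{M}_{\omega\check{\omega}}f(y)\,dy$; in $(g\ast_\omega f)(e)$ the integrand is supported in $(\mathrm{supp}(f))^{-1}\subseteq C$, where $g(y)=1$, so this value equals $\int_G\mathcal{M}_{\omega\check{\omega}}f(y^{-1})\,dy$. Equating the two, via commutativity at $e$, is exactly \eqref{implies unimodularity}. The only genuine obstacle is the one just circumvented: the natural test function is the constant $1$, which fails to be $\omega$-integrable, so the argument must be localized through the compact support of $f$ and then symmetrized by the averaging operator $\sharp$ to stay inside $\mathcal{L}^1_\omega(K\setminus G/K)$; once this is set up the rest is a direct substitution.
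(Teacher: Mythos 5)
Your proposal is correct and follows essentially the same route as the paper: choose $g\in C_c(K\setminus G/K)$ equal to $1$ on $\mathrm{supp}(f)\cup(\mathrm{supp}(f))^{-1}$, observe that $\int_G\mathcal{M}_{\omega\check{\omega}}f(x)\,dx=(f\ast_\omega g)(e)$ and $\int_G\mathcal{M}_{\omega\check{\omega}}f(x^{-1})\,dx=(g\ast_\omega f)(e)$, and invoke commutativity. Your extra care in actually constructing $g$ via Urysohn and the $\sharp$-projection, and in upgrading the a.e.\ identity $f\ast_\omega g=g\ast_\omega f$ to a pointwise one at $e$ by continuity, only fills in details the paper leaves implicit.
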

\begin{proof}
Let $g \in C_{c}(K\setminus G/K)$ be such that $g\equiv 1$ over the compact set $Supp(f)\cup~(Supp(f))^{-1}$. Then,
		\begin{align*}
		\int_{G}\mathcal{M}_{\omega\check{\omega}}f(x)dx &=\int_{G}f(x)\omega(x)\omega(x^{-1})dx\\
	 & = \int_{G} f(x)g(x^{-1}e)\dfrac{\omega(x)\omega(x^{-1}e)}{\omega(e)}dx \\
	                           & =   (f\ast_{\omega}g)(e) \\
	                           &=    (g\ast_{\omega} f)(e)\\
	                           & =  \int_{G} f(x^{-1}e)g(x)\dfrac{\omega(x^{-1}e)\omega(x)}{w(e)}dx\\
	                           & =  \int_{G} f(x^{-1})\omega(x^{-1})\omega(x)dx \\
	                           &= 
	  \int_{G}\mathcal{M}_{\omega\check{\omega}}f(x^{-1})dx.
\end{align*}
\end{proof}
\begin{remark}{\rm
If $\omega\equiv 1$, then the equality (\ref{implies unimodularity}) implies $\displaystyle\int_{G}f(x)dx= \displaystyle\int_{G}f(x^{-1})dx$. Therefore, we recover the classical  fact which states that  if  $(G,K)$ is a Gelfand pair, then $G$ is a unimodular group.} 
\end{remark}

\subsection{Weighted spherical functions and weighted  spherical Fourier transform}\label{Weighted spherical functions and weighted Fourier transform}
Let $(G,K)$ be a Gelfand pair. A spherical function for $(G,K)$   is a $K$-bi-invariant continuous function $\varphi$ on $G$, such that the map $\chi : f \longmapsto \chi(f)= \displaystyle \int_{G} f(x)\varphi(x^{-1})dx$ is a non-zero continuous character of the convolution algebra $C_{c}(K\setminus G/K)$; that is,  $\chi$ is linear, continuous and $\chi(f\ast g) = \chi(f)\chi(g), \forall f,g \in C_{c}(K\setminus G/K)$. 
It is equivalent to say that a non-zero continuous $K$-bi-invariant function $\varphi$ is a spherical function for the Gelfand pair $(G,K)$ if and only if $\displaystyle\int_K\varphi (xky)dk=\varphi (x)\varphi (y)$, (see for instance \cite{Dijk, Kangni2}). 
In what follows, we state the suitable definition of a spherical function  for the  convolution algebra $\mathcal{L}^{1}_{\omega}(K \setminus G/ K)$. Moreover, the appropriate Fourier transform in this new context  is considered.

\begin{definition}
Let $(G,K,\omega)$ be a weighted Gelfand pair. Let $\varphi$ be a non-zero $K$-bi-invariant. Set $\chi_{\varphi,\omega}(f)=\displaystyle\int_{G}\mathcal{M}_{\omega}f(x)\mathcal{M}_{\omega}\varphi(x^{-1})dx$. 
The function $\varphi$  is called an $\omega$-spherical function on $G$ if $\chi_{\varphi,\omega}$  verifies the property 
$\chi_{\varphi,\omega} (f\ast_\omega g)= \chi_{\varphi,\omega}(f)\chi_{\varphi,\omega}(g),\, \forall f,g \in C_{c}(K\setminus G/K)$. 
\end{definition}
 
\begin{theorem}\label{sphe}
	Let $(G,K,\omega)$ be a weighted Gelfand pair.
	Let $\varphi$ be a non-zero $K$-bi-invariant continuous function on $G$.  The function $\varphi : G\longrightarrow\mathbb{C}$ is    an $\omega$-spherical function if and only if $\forall x,y\in G$,
	 $$\int_{K} \mathcal{M}_{\omega}\varphi(xky)dk = \mathcal{M}_{\omega}\varphi(x) \mathcal{M}_{\omega}\varphi(y).$$
\end{theorem}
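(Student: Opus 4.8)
The plan is to reduce the statement to the classical characterization of spherical functions already recalled in this subsection, by transporting everything through the multiplication operator $\mathcal{M}_{\omega}$. The pivotal observation is that $\mathcal{M}_{\omega}$ intertwines the weighted convolution with the classical one:
\begin{align*}
\mathcal{M}_{\omega}(f\ast_{\omega}g)(x) &= \omega(x)\int_{G}f(y)g(y^{-1}x)\frac{\omega(y)\omega(y^{-1}x)}{\omega(x)}\,dy\\
&= \int_{G}\mathcal{M}_{\omega}f(y)\,\mathcal{M}_{\omega}g(y^{-1}x)\,dy=\big(\mathcal{M}_{\omega}f\ast\mathcal{M}_{\omega}g\big)(x),
\end{align*}
so that $\mathcal{M}_{\omega}(f\ast_{\omega}g)=\mathcal{M}_{\omega}f\ast\mathcal{M}_{\omega}g$, where $\ast$ denotes the classical convolution. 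First I would record that, since $\omega$ is positive, continuous and $K$-bi-invariant, $\mathcal{M}_{\omega}$ is a bijection of $C_{c}(K\setminus G/K)$ onto itself with inverse $\mathcal{M}_{1/\omega}$, and an isometric algebra isomorphism from $(\mathcal{L}^{1}_{\omega}(K\setminus G/K),\ast_{\omega})$ onto $(L^{1}(K\setminus G/K),\ast)$. In particular commutativity transfers, so the weighted Gelfand hypothesis forces $(G,K)$ to be a classical Gelfand pair, which is exactly what is needed to invoke the classical theorem.

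Next I would set $\Phi=\mathcal{M}_{\omega}\varphi$ and rewrite the functional $\chi_{\varphi,\omega}$ in classical form. By definition $\chi_{\varphi,\omega}(f)=\int_{G}\mathcal{M}_{\omega}f(x)\,\Phi(x^{-1})\,dx=\chi_{\Phi}(\mathcal{M}_{\omega}f)$, where $\chi_{\Phi}(F)=\int_{G}F(x)\Phi(x^{-1})\,dx$ is precisely the classical functional attached to $\Phi$. Combining this with the intertwining identity gives $\chi_{\varphi,\omega}(f\ast_{\omega}g)=\chi_{\Phi}(\mathcal{M}_{\omega}f\ast\mathcal{M}_{\omega}g)$, while $\chi_{\varphi,\omega}(f)\chi_{\varphi,\omega}(g)=\chi_{\Phi}(\mathcal{M}_{\omega}f)\chi_{\Phi}(\mathcal{M}_{\omega}g)$. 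Because $\mathcal{M}_{\omega}$ maps $C_{c}(K\setminus G/K)$ bijectively onto itself, the pair $(\mathcal{M}_{\omega}f,\mathcal{M}_{\omega}g)$ runs over all pairs $(F,H)$ in $C_{c}(K\setminus G/K)$ as $(f,g)$ does; hence the $\omega$-spherical condition for $\varphi$ is equivalent to $\chi_{\Phi}(F\ast H)=\chi_{\Phi}(F)\chi_{\Phi}(H)$ for all $F,H\in C_{c}(K\setminus G/K)$, that is, to $\Phi=\mathcal{M}_{\omega}\varphi$ being a classical spherical function for $(G,K)$. Here one also checks that $\Phi$ is continuous, $K$-bi-invariant and non-zero (as $\varphi$ is non-zero and $\omega>0$), and that the non-triviality and continuity of $\chi_{\Phi}$ follow from those of $\chi_{\varphi,\omega}$ through the topological isomorphism $\mathcal{M}_{\omega}$, so that $\Phi$ genuinely qualifies for the classical notion.

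Finally I would apply the classical characterization recalled in the text: $\Phi$ is spherical for the Gelfand pair $(G,K)$ if and only if $\int_{K}\Phi(xky)\,dk=\Phi(x)\Phi(y)$ for all $x,y\in G$. Substituting $\Phi=\mathcal{M}_{\omega}\varphi$ turns this into $\int_{K}\mathcal{M}_{\omega}\varphi(xky)\,dk=\mathcal{M}_{\omega}\varphi(x)\,\mathcal{M}_{\omega}\varphi(y)$, which is exactly the asserted equivalence. The main obstacle I anticipate is not any single computation but the bookkeeping at the reduction step: one must make sure that the character conditions (multiplicativity together with continuity and non-triviality) transfer faithfully between $\chi_{\varphi,\omega}$ and $\chi_{\Phi}$, and that $\mathcal{M}_{\omega}$ indeed preserves $K$-bi-invariance and restricts to a bijection of $C_{c}(K\setminus G/K)$, so that the classical theorem may legitimately be applied to $\Phi$.
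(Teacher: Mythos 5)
Your proof is correct, but it takes a genuinely different route from the one in the paper. The paper proves the equivalence by a direct computation: it expands $\chi_{\varphi,\omega}(f\ast_{\omega}g)$, performs the change of variables $x\mapsto yx$, inserts the normalized integral $\int_K dk$ using the $K$-bi-invariance of the data, and exhibits $\chi_{\varphi,\omega}(f\ast_{\omega}g)-\chi_{\varphi,\omega}(f)\chi_{\varphi,\omega}(g)$ as a double integral against $\mathcal{M}_{\omega}f(y^{-1})\mathcal{M}_{\omega}g(x^{-1})$ of exactly the quantity $\int_{K}\mathcal{M}_{\omega}\varphi(xky)dk-\mathcal{M}_{\omega}\varphi(x)\mathcal{M}_{\omega}\varphi(y)$, from which both implications are read off. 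You instead observe that $\mathcal{M}_{\omega}$ intertwines $\ast_{\omega}$ with the classical convolution and restricts to a bijection of $C_{c}(K\setminus G/K)$, so that $\chi_{\varphi,\omega}=\chi_{\Phi}\circ\mathcal{M}_{\omega}$ with $\Phi=\mathcal{M}_{\omega}\varphi$, and you then quote the classical characterization of spherical functions. Your reduction is more conceptual: it makes explicit that the whole weighted theory is the image of the classical one under $\mathcal{M}_{1/\omega}$ (an observation the paper itself exploits only later, in the proof of the injectivity of the weighted spherical Fourier transform), and it yields the result with essentially no computation. The paper's direct argument, by contrast, is self-contained and avoids the small amount of bookkeeping your route requires, namely that the classical statement is phrased for a \emph{non-zero continuous character} while the paper's definition of an $\omega$-spherical function only demands multiplicativity of $\chi_{\varphi,\omega}$; you do flag this, and the gap is harmless (for a non-zero continuous $K$-bi-invariant $\Phi$ the functional $\chi_{\Phi}$ cannot vanish identically on $C_{c}(K\setminus G/K)$), but it is a point your write-up should make fully explicit before invoking the classical theorem. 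Both arguments also share the same implicit density step in passing from the vanishing of the double integral for all $f,g$ to the pointwise identity.
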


\begin{proof}
 Let  $f,g$ be in $C_{c}(K\setminus G/K)$.
\begin{align*}
\chi_{\varphi,\omega}(f\ast_{\omega}g) &= \int_{G} \mathcal{M}_{\omega}(f\ast_{\omega}g)(x)\mathcal{M}_{\omega}\varphi(x^{-1})dx \\
 &=\int_{G} (f\ast_{\omega}g)(x)\mathcal{M}_{\omega}\varphi(x^{-1})\omega(x)dx\\
	 				 &=  \int_{G} \int_{G} f(y)g(y^{-1}x)\mathcal{M}_{\omega}\varphi(x^{-1})\omega(y)\omega(y^{-1}x)dxdy. 
\end{align*}
By the change of variables  $x \longrightarrow yx$,  we have
\begin{align*}
	 \chi_{\varphi,\omega}(f\ast_{\omega}g)&= \int_{G} \int_{G} \mathcal{M}_{\omega}f(y)\mathcal{M}_{\omega}g(x)\mathcal{M}_{\omega}\varphi(x^{-1}y^{-1})dxdy \\		
 &= \int_{K}dk\int_{G} \int_{G} \mathcal{M}_{\omega}f(y)\mathcal{M}_{\omega}g(x)\mathcal{M}_{\omega}\varphi(x^{-1}ky^{-1})dxdy. \\
& = 	\int_{G} \int_{G}\mathcal{M}_{\omega} f(y)\mathcal{M}_{\omega}g(x)\left( \int_{K}\mathcal{M}_{\omega}\varphi(x^{-1}ky^{-1})dk\right)dxdy \\
	 & =	\int_{G} \int_{G}\mathcal{M}_{\omega} f(y^{-1})\mathcal{M}_{\omega}g(x^{-1})\left( \int_{K}\mathcal{M}_{\omega}\varphi(xky)dk\right)dxdy.	 
	\end{align*}
	Moreover,
\begin{align*}
 \chi_{\varphi,\omega}(f)\chi_{\varphi,\omega}(g)	
	& = \left( \int_{G}\mathcal{M}_{\omega} f(y)\mathcal{M}_{\omega}\varphi(y^{-1})dy\right) \left(  \int_{G}\mathcal{M}_{\omega}g(x)\mathcal{M}_{\omega}\varphi(x^{-1})dx \right) \\
	& =	\int_{G} \int_{G}\mathcal{M}_{\omega} f(y)\mathcal{M}_{\omega}g(x)\left(\mathcal{M}_{\omega}\varphi(x^{-1}) \mathcal{M}_{\omega}\varphi(y^{-1})\right)dxdy \\
	& = 	\int_{G} \int_{G}\mathcal{M}_{\omega} f(y^{-1})\mathcal{M}_{\omega}g(x^{-1})\left(\mathcal{M}_{\omega}\varphi(x) \mathcal{M}_{\omega}\varphi(y)\right)dxdy.
\end{align*}
\begin{flushleft}
	$\chi_{\varphi,\omega}(f\ast_{\omega}g)-\chi_{\varphi,\omega}(f)\chi_{\varphi,\omega}(g)=$
\end{flushleft}
\begin{eqnarray*}
	\int_{G} \int_{G}\mathcal{M}_{\omega} f(y^{-1})\mathcal{M}_{\omega}g(x^{-1})\left( \int_{K}\mathcal{M}_{\omega}\varphi(xky)dk - 
	\mathcal{M}_{\omega}\varphi(x)
	\mathcal{M}_{\omega}\varphi(y)\right)dxdy .
\end{eqnarray*}
The function  $\varphi$ is an $\omega$-spherical function if and only if
 $\forall f,g \in C_{c}(K\setminus G/K),\,\chi_{\varphi,\omega}(f\ast_{\omega}g)= \chi_{\varphi,\omega}(f)\chi_{\varphi,\omega}(g)$. That is,
 $\displaystyle\int_{K} \mathcal{M}_ {\omega}\varphi(xky)dk = \mathcal{M}_{\omega}\varphi(x) \mathcal{M}_{\omega}\varphi(y)$. 
\end{proof} 

\begin{theorem}
	Let $(G,K,\omega)$ be a weighted Gelfand pair. Let $\varphi$ be a continuous $K$-bi-invariant function on $G$. Then, $\varphi$ is an $\omega$-spherical function if and only if the following two conditions hold.
	\begin{enumerate}
		\item \label{numero 1} $\varphi(e)=1$,
		\item \label{numero 2} For each $ f\in C_{c}(K\setminus G/K)$, there exists a scalar $\chi_{\varphi,\omega}(f)$ such that  $f \ast_{\omega} \varphi =  \chi_{\varphi,\omega}(f)\varphi$.
	\end{enumerate}
\end{theorem}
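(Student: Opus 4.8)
The plan is to push the whole problem to the classical unweighted setting by means of the multiplication operator $\mathcal{M}_{\omega}$. Set $\psi:=\mathcal{M}_{\omega}\varphi$; since $\omega$ is $K$-bi-invariant, $\psi$ is a continuous $K$-bi-invariant function, and by Theorem \ref{sphe} the function $\varphi$ is $\omega$-spherical exactly when $\int_{K}\psi(xky)\,dk=\psi(x)\psi(y)$ for all $x,y\in G$, that is, when $\psi$ obeys the ordinary spherical functional equation. The first thing I would record is that $\mathcal{M}_{\omega}$ intertwines the two convolutions, $\mathcal{M}_{\omega}(f\ast_{\omega}g)=\mathcal{M}_{\omega}f\ast\mathcal{M}_{\omega}g$ (classical convolution on the right), which is immediate from $\omega(x)(f\ast_{\omega}g)(x)=\int_{G}\mathcal{M}_{\omega}f(y)\,\mathcal{M}_{\omega}g(y^{-1}x)\,dy$. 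In particular $\mathcal{M}_{\omega}(f\ast_{\omega}\varphi)=\mathcal{M}_{\omega}f\ast\psi$, and $\mathcal{M}_{\omega}f$ inherits $K$-bi-invariance from $f$.

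For the direct implication, suppose $\varphi$ is $\omega$-spherical, so the functional equation holds for $\psi$. Taking $y=e$ and using the right $K$-invariance of $\psi$ gives $\psi(x)=\psi(x)\psi(e)$ for every $x$; since $\omega>0$ and $\varphi\neq0$ force $\psi\not\equiv0$, we get $\psi(e)=1$, hence $\varphi(e)=1$ under the normalization $\omega(e)=1$, which is condition (\ref{numero 1}). For condition (\ref{numero 2}) I would symmetrize the $K$-variable in $\mathcal{M}_{\omega}f\ast\psi$: writing $(\mathcal{M}_{\omega}f\ast\psi)(x)=\int_{G}\mathcal{M}_{\omega}f(y)\psi(y^{-1}x)\,dy$, averaging over $K$, substituting $y\mapsto ky$ (left invariance of Haar measure), using the left $K$-invariance of $\mathcal{M}_{\omega}f$, and finally substituting $k\mapsto k^{-1}$ in the $K$-average turns the inner factor into $\int_{K}\psi(y^{-1}kx)\,dk=\psi(y^{-1})\psi(x)$ by the functional equation. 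Thus $\mathcal{M}_{\omega}f\ast\psi=\big(\int_{G}\mathcal{M}_{\omega}f(y)\psi(y^{-1})\,dy\big)\psi=\chi_{\varphi,\omega}(f)\,\psi$, and dividing by $\omega$ yields $f\ast_{\omega}\varphi=\chi_{\varphi,\omega}(f)\varphi$.

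For the converse, assume (\ref{numero 1}) and (\ref{numero 2}), say $f\ast_{\omega}\varphi=c(f)\varphi$. Applying $\mathcal{M}_{\omega}$ and evaluating at $e$, where $\psi(e)=\omega(e)\varphi(e)=1$, forces $c(f)=(\mathcal{M}_{\omega}f\ast\psi)(e)=\int_{G}\mathcal{M}_{\omega}f(y)\psi(y^{-1})\,dy=\chi_{\varphi,\omega}(f)$, so the scalar is exactly the one defined before the theorem. Now I would expand $(f\ast_{\omega}g)\ast_{\omega}\varphi$ in two ways using associativity of $\ast_{\omega}$: on one side it equals $\chi_{\varphi,\omega}(f\ast_{\omega}g)\varphi$ by (\ref{numero 2}), on the other it equals $f\ast_{\omega}(\chi_{\varphi,\omega}(g)\varphi)=\chi_{\varphi,\omega}(f)\chi_{\varphi,\omega}(g)\varphi$. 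As $\varphi(e)=1$ guarantees $\varphi\neq0$, cancelling $\varphi$ gives $\chi_{\varphi,\omega}(f\ast_{\omega}g)=\chi_{\varphi,\omega}(f)\chi_{\varphi,\omega}(g)$, i.e. $\varphi$ is $\omega$-spherical.

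The step I expect to require the most care is the symmetrization in the direct implication: one has to place the $K$-integration between $y^{-1}$ and $x$ so that the functional equation $\int_{K}\psi(akb)\,dk=\psi(a)\psi(b)$ applies, which is exactly where the left $K$-invariance of $\mathcal{M}_{\omega}f$, the left invariance of Haar measure, and the inversion-invariance of the Haar measure on the compact group $K$ are all used. A secondary point is that $\varphi$ need not lie in $\mathcal{L}^{1}_{\omega}(G)$, so the convolutions $f\ast_{\omega}\varphi$ and the associativity used in the converse must be read pointwise; this is harmless since $f,g\in C_{c}(K\setminus G/K)$ are compactly supported and $\varphi$ is continuous, making every integrand continuous with compact support in the integration variable and legitimizing the use of Fubini's theorem.
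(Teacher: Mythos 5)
Your proof is correct, and the forward direction follows essentially the paper's route: derive $\varphi(e)=1$ from the functional equation of Theorem \ref{sphe} at $y=e$, then insert a $K$-average into $f\ast_{\omega}\varphi$ so that the functional equation collapses the inner integral to $\mathcal{M}_{\omega}\varphi(y^{-1})\mathcal{M}_{\omega}\varphi(x)$. In fact you are more careful than the paper at the one delicate point: the paper simply writes $\int_K dk$ in front of the integral and then moves the $k$ inside the argument of $\mathcal{M}_{\omega}\varphi$ without comment, whereas you justify this symmetrization via the left $K$-invariance of $\mathcal{M}_{\omega}f$, left invariance of Haar measure, and inversion invariance on the compact group $K$ --- exactly the justification the paper omits. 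Where you genuinely diverge is the converse: the paper recomputes $\chi_{\varphi,\omega}(f\ast_{\omega}g)$ as a double integral, recognizes $(g\ast_{\omega}\varphi)(y^{-1})$ inside it, and substitutes condition (\ref{numero 2}); you instead expand $(f\ast_{\omega}g)\ast_{\omega}\varphi$ two ways using associativity of $\ast_{\omega}$ and cancel $\varphi$. Your version is cleaner and more algebraic (it is the standard eigenfunction argument), at the cost of having to verify that associativity and the pointwise convolutions make sense when $\varphi\notin\mathcal{L}^{1}_{\omega}(G)$, which you correctly dispose of via compact supports; you also explicitly identify the scalar of condition (\ref{numero 2}) with $\chi_{\varphi,\omega}(f)$ by evaluating at $e$, a step the paper silently assumes. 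Note finally that both your argument and the paper's need the normalization $\omega(e)=1$ to get $\varphi(e)=1$ rather than $\varphi(e)=1/\omega(e)$; this hypothesis is missing from the theorem statement itself, so flagging it as you did is appropriate.
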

\begin{proof}
	Let us assume that $\varphi$ is a $\omega$-spherical function. Then, 
	$\displaystyle\int_{K}\mathcal{M}_{\omega}\varphi(xke)dk =\mathcal{M}_{\omega}\varphi(x)\mathcal{M}_{\omega}\varphi(e)$. We  used the $K$-right-invariance of the functions $\varphi$ and $\omega$, the condition $\omega (e)=1$   and  the fact that the Haar measure of $K$ is normalized, to obtain
$ \mathcal{M}_{\omega}\varphi(x)= \mathcal{M}_{\omega}\varphi(x)\times \varphi(e)$. Then, 
$\forall x\in G,   \mathcal{M}_{\omega} \varphi(x) \left[ \varphi(e)-1\right] =0$. This implies  $\varphi(e)=1$.
In the other hand, 
	
\begin{align*}
			f \ast_{\omega} \varphi(x)&= \int_{G}f(y)\varphi(y^{-1}x)\dfrac{\omega(y)\omega(y^{-1}x)}{\omega(x)}dy \\
			&= \int_{K}dk \int_{G}f(y)\varphi(y^{-1}x)\dfrac{\omega(y)\omega(y^{-1}x)}{\omega(x)}dy \\ 
			&= \int_{K}dk \int_{G}f(y)\omega(y)\mathcal{M}_{\omega}\varphi(y^{-1}x)\dfrac{dy}{\omega(x)} \\
			&=  \int_{G}f(y)\omega(y)\left( \int_{K}\mathcal{M}_{\omega}\varphi(y^{-1}kx)dk\right) \dfrac{dy}{\omega(x)} \\ 
			&= \int_{G}f(y)\omega(y) \mathcal{M}_{\omega}\varphi(y^{-1})\mathcal{M}_{\omega}\varphi(x) \dfrac{dy}{\omega(x)} \\
			&= \dfrac{\mathcal{M}_{\omega}\varphi(x)}{\omega(x)}\int_{G}f(y)\mathcal{M}_{\omega}\varphi(y^{-1})\omega(y)dy\\
				&= \varphi(x)\chi_{\varphi,\omega}(f).
		\end{align*}
	Thus, $f \ast_{\omega} \varphi(x)=\varphi(x)\chi_{\varphi,\omega}(f).$
	
 In the converse, let us assume \ref{numero 1} and \ref{numero 2}. We may show that  $\chi_{\varphi,\omega}(f\ast_{\omega}g) = \chi_{\varphi,\omega}(f)\chi_{\varphi,\omega}(g)$. 

\begin{align*}
\chi_{\varphi,\omega}  (f\ast_{\omega}g) & =  \int_{G} f\ast_{\omega}g(x)\mathcal{M}_{\omega}\varphi(x^{-1})\omega(x)dx \\
		 &=  \int_{G} \int_{G} f(y)g(y^{-1}x)\mathcal{M}_{\omega}\varphi(x^{-1})\omega(y)\omega(y^{-1}x)dxdy\\
		 & =  \int_{G} \left( f(y)\omega(y)\right) \int_{G} g(y^{-1}x)\varphi(x^{-1})\frac{\omega(y^{-1}x)\varphi(x^{-1})}{\omega(y^{-1})}\omega(y^{-1})dxdy\\
		 &= \int_{G}f(y)\omega(y)\omega(y^{-1}) \left(g\ast_{\omega}\varphi \right)(y^{-1})dy\\
		 &= \int_{G}f(y)\omega(y)\omega(y^{-1})\varphi(y^{-1})\chi(g)dy\\
		 &= \left( \int_{G}\mathcal{M}_{\omega}f(y) \mathcal{M}_{\omega}\varphi(y^{-1})dy\right) \chi(g)\\
		 &= \chi_{\varphi,\omega}(f)\chi_{\varphi,\omega}(g).
\end{align*}
\end{proof}

We denote by $S_{\omega}(G,K)$  the set of $\omega$-spherical functions for the weighted Gelfand pair $(G, K,\omega)$. We set  
$$S^{b}_{\omega}(G,K) = \left\lbrace \varphi \in S_{\omega}(G,K) \,:\,  \mathcal{M}_{\omega}\varphi  \mbox{ is  bounded} \right\rbrace.$$

\begin{theorem}
	Each non-zero character $\chi$ of $\mathcal{L}^{1}_{\omega
	}(K\setminus G/K)$ is of the form 
	\begin{eqnarray}
	\chi(f)= \int_{G}\mathcal{M}_{\omega}f(x)\mathcal{M}_{\omega}\varphi(x^{-1})dx
	\end{eqnarray}
	where $\varphi \in S^{b}_{\omega}(G,K)$.
	\end{theorem}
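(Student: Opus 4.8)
The plan is to reduce the statement to the classical theory of Gelfand pairs by transporting everything through the multiplication operator $\mathcal{M}_{\omega}$. The key remark is that $\mathcal{M}_{\omega}$ converts the generalized convolution into the classical one: a one-line computation gives, for $f,g\in\mathcal{L}^{1}_{\omega}(G)$,
$$\mathcal{M}_{\omega}(f\ast_{\omega}g)=\mathcal{M}_{\omega}f\ast\mathcal{M}_{\omega}g,$$
since the factor $\omega(x)$ cancels the denominator $\omega(x)$ in the definition of $\ast_{\omega}$ and redistributes onto $f$ and $g$. Because $\|\mathcal{M}_{\omega}f\|_{1}=\|f\|_{1,\omega}$ and $\mathcal{M}_{\omega}$ preserves $K$-bi-invariance (here one uses that $\omega$ is $K$-bi-invariant), the operator $\mathcal{M}_{\omega}$ is an isometric isomorphism of Banach algebras from $\mathcal{L}^{1}_{\omega}(K\setminus G/K)$ onto the classical algebra $L^{1}(K\setminus G/K)$, with inverse $F\mapsto F/\omega$. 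Since $(G,K,\omega)$ is a weighted Gelfand pair, commutativity is transported, so $\bigl(L^{1}(K\setminus G/K),\ast\bigr)$ is commutative and $(G,K)$ is an ordinary Gelfand pair; this legitimizes the use of classical spherical function theory.

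First I would transport the given functional. If $\chi$ is a non-zero character of $\mathcal{L}^{1}_{\omega}(K\setminus G/K)$, then $\eta:=\chi\circ\mathcal{M}_{\omega}^{-1}$ is a non-zero bounded multiplicative linear functional on $L^{1}(K\setminus G/K)$, i.e.\ a non-zero character of the classical Gelfand pair algebra. By the classical representation of such characters (see e.g.\ \cite{Dijk, Faraut, Kangni2}), there is a bounded spherical function $\psi$ for $(G,K)$ --- continuous, $K$-bi-invariant, and satisfying $\int_{K}\psi(xky)\,dk=\psi(x)\psi(y)$ --- such that
$$\eta(F)=\int_{G}F(x)\,\psi(x^{-1})\,dx,\qquad F\in L^{1}(K\setminus G/K).$$

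It then remains to manufacture the $\omega$-spherical function. I would set $\varphi:=\psi/\omega=\mathcal{M}_{\omega}^{-1}\psi$, which is continuous and $K$-bi-invariant and satisfies $\mathcal{M}_{\omega}\varphi=\psi$; boundedness of $\psi$ is then exactly the statement that $\mathcal{M}_{\omega}\varphi$ is bounded. Substituting $\mathcal{M}_{\omega}\varphi=\psi$ into $\int_{K}\psi(xky)\,dk=\psi(x)\psi(y)$ rewrites it as $\int_{K}\mathcal{M}_{\omega}\varphi(xky)\,dk=\mathcal{M}_{\omega}\varphi(x)\mathcal{M}_{\omega}\varphi(y)$, which by Theorem \ref{sphe} is precisely the criterion for $\varphi$ to be $\omega$-spherical; hence $\varphi\in S^{b}_{\omega}(G,K)$. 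Finally, for $f\in\mathcal{L}^{1}_{\omega}(K\setminus G/K)$,
$$\chi(f)=\eta(\mathcal{M}_{\omega}f)=\int_{G}\mathcal{M}_{\omega}f(x)\,\psi(x^{-1})\,dx=\int_{G}\mathcal{M}_{\omega}f(x)\,\mathcal{M}_{\omega}\varphi(x^{-1})\,dx,$$
which is the asserted form.

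The step I expect to be the main obstacle is the invocation of the classical representation theorem, and specifically the regularity it conceals: one must know that a bounded multiplicative functional on $L^{1}(K\setminus G/K)$ is represented by a genuinely \emph{continuous} spherical function, not merely an $L^{\infty}$ kernel. If a self-contained argument is preferred to a citation, this is where the real work lies: duality first represents $\eta$ by some $g\in L^{\infty}(G)$ via $\eta(F)=\int_{G}F g\,dx$, and one must upgrade $g$ to a continuous spherical function using an approximate identity together with the multiplicativity $\eta(F\ast F')=\eta(F)\eta(F')$ to extract both the continuity and the functional equation. Everything else is the routine bookkeeping of carrying data back and forth across the isometric isomorphism $\mathcal{M}_{\omega}$.
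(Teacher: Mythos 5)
Your argument is correct, but it is organized quite differently from the paper's. The paper works directly in the weighted setting: it invokes duality to represent the (automatically continuous) character $\chi$ by some $\varphi$ with $\omega\varphi\in L^{\infty}$, then expands $\chi(f\ast_{\omega}g)-\chi(f)\chi(g)=0$ as a double integral and reads off the functional equation $\int_{K}\mathcal{M}_{\omega}\varphi(xky)\,dk=\mathcal{M}_{\omega}\varphi(x)\mathcal{M}_{\omega}\varphi(y)$ --- in effect re-proving the classical representation theorem with the weight carried along. You instead observe that $\mathcal{M}_{\omega}$ is an isometric algebra isomorphism of $\mathcal{L}^{1}_{\omega}(K\setminus G/K)$ onto $L^{1}(K\setminus G/K)$ (your identity $\mathcal{M}_{\omega}(f\ast_{\omega}g)=\mathcal{M}_{\omega}f\ast\mathcal{M}_{\omega}g$ is correct), transport $\chi$ to a character of the classical algebra, quote the classical theorem for the Gelfand pair $(G,K)$, and pull the spherical function back as $\varphi=\psi/\omega$. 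This is precisely the transfer principle the paper itself uses to prove semisimplicity in Theorem \ref{injective} (where it notes $\varphi\in S^{b}_{\omega}(G,K)\Leftrightarrow\omega\varphi\in S(G,K)$), so your route is entirely consistent with the paper's toolbox. What your route buys is rigour on the point you flag yourself: the classical theorem delivers a genuinely \emph{continuous} bounded spherical function, whereas the paper's duality step only produces $\varphi\in L^{\infty}_{1/\omega}(K\setminus G/K)$ and passes silently from the vanishing of a double integral for all $f,g$ to the pointwise functional equation; your version localizes that approximate-identity/regularity argument inside the cited classical result rather than leaving it implicit. What the paper's direct computation buys is self-containedness in the weighted language, at the cost of that glossed-over step.
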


\begin{proof}
	Let $\chi$ be a non-zero character of $\mathcal{L}^{1}_{\omega
	}(K\setminus G/K)$. Then, $\chi$ is a continuous linear form on $\mathcal{L}^{1}_{\omega
	}(K\setminus G/K)$. Then, there exist a function $\varphi \in  L^{\infty}_{\frac{1}{{\omega}}}(K\setminus G/K)$ such that 
$$\chi(f)= \int_{G}\mathcal{M}_{\omega}f(x)\mathcal{M}_{\omega}\varphi(x^{-1})dx.$$
Let us show that $\varphi$ is a weighted spherical function.
For all  $f, g \in C_{c}(K\setminus G/K)$. We have
	\begin{align*}
		0&=\chi(f\ast_{\omega}g)-\chi(f)\chi(g)\\
		&=\int_{G} \int_{G}\left[\mathcal{M}_{\omega}\varphi(xy)-\mathcal{M}_{\omega}\varphi(x)\mathcal{M}_{\omega}\varphi(y)\right]\mathcal{M}_{\omega}f(x)\mathcal{M}_{\omega}g(y)dxdy \\		
&= \int_{G} \int_{G}\left[\int_K\mathcal{M}_{\omega}\varphi(xky)dk-\mathcal{M}_{\omega}\varphi(x)\mathcal{M}_{\omega}\varphi(y) \right]\mathcal{M}_{\omega}f(x^{-1})\mathcal{M}_{\omega}g(y^{-1})dxdy.	
	\end{align*}
Therefore, $\displaystyle\int_K\mathcal{M}_{\omega}\varphi(xky)dk=\mathcal{M}_{\omega}\varphi(x)\mathcal{M}_{\omega}\varphi(y) $. Thus, $\varphi$ is a weighted spherical function.
\end{proof}
We are now able to introduce the notion of weighted spherical Fourier transform. 
\begin{definition} Let $(G,K,\omega)$ be a weighted Gelfand pair. 
	The weighted spherical Fourier transform  of a function $f \in \mathcal{L}_{\omega}(K\setminus G/K)$ is denoted by $$ \mathcal{F}^{\omega}(f)(\varphi)= \int_{G}\mathcal{M}_{\omega}f(x)\mathcal{M}_{\omega}\varphi(x^{-1})dx,\, \varphi \in  S^b_{\omega}(G,K).$$
\end{definition}

The weighted spherical Fourier transform satisfies the following convolution theorem.
\begin{theorem}
 \begin{equation}\label{Fourier} 
\forall f,g \in \mathcal{L}_{\omega}(K\setminus G/K),\,\mathcal{F}^{\omega}(f\ast_{\omega}g) = \mathcal{F}^{\omega}(f)\mathcal{F}^{\omega}(g).\end{equation}	
\end{theorem}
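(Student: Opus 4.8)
The plan is to reduce the identity (\ref{Fourier}) to the defining multiplicativity of $\omega$-spherical functions, which is available a priori only on the dense subalgebra $C_c(K\setminus G/K)$, and then to upgrade it to all of $\mathcal{L}^{1}_{\omega}(K\setminus G/K)$ by a continuity argument. The starting observation is that for a fixed $\varphi\in S^{b}_{\omega}(G,K)$ the evaluation $f\mapsto \mathcal{F}^{\omega}(f)(\varphi)$ coincides with the functional $\chi_{\varphi,\omega}$, and that this functional is bounded on $\mathcal{L}^{1}_{\omega}(K\setminus G/K)$: since $\mathcal{M}_{\omega}\varphi$ is bounded,
$$|\mathcal{F}^{\omega}(f)(\varphi)|=\left|\int_{G}\mathcal{M}_{\omega}f(x)\mathcal{M}_{\omega}\varphi(x^{-1})\,dx\right|\le \|\mathcal{M}_{\omega}\varphi\|_{\infty}\,\|f\|_{1,\omega}.$$
Thus $\mathcal{F}^{\omega}(\cdot)(\varphi)$ is continuous on $\mathcal{L}^{1}_{\omega}(K\setminus G/K)$.

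Next I would fix $\varphi\in S^{b}_{\omega}(G,K)$ and consider the defect
$$\Phi(f,g)=\mathcal{F}^{\omega}(f\ast_{\omega}g)(\varphi)-\mathcal{F}^{\omega}(f)(\varphi)\,\mathcal{F}^{\omega}(g)(\varphi).$$
On the subalgebra $C_c(K\setminus G/K)$ one has $\Phi\equiv 0$: indeed $\mathcal{F}^{\omega}(f)(\varphi)=\chi_{\varphi,\omega}(f)$, and since $\varphi$ is in particular an $\omega$-spherical function, $\chi_{\varphi,\omega}(f\ast_{\omega}g)=\chi_{\varphi,\omega}(f)\chi_{\varphi,\omega}(g)$ for all $f,g\in C_c(K\setminus G/K)$. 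It remains to verify that $\Phi$ is continuous in each argument. This uses that $\mathcal{L}^{1}_{\omega}(K\setminus G/K)$ is a Banach algebra, so that $\|f\ast_{\omega}g\|_{1,\omega}\le \|f\|_{1,\omega}\|g\|_{1,\omega}$ and the map $(f,g)\mapsto f\ast_{\omega}g$ is continuous; composing with the continuous functional above makes $f\mapsto \mathcal{F}^{\omega}(f\ast_{\omega}g)(\varphi)$ continuous, while the subtracted term is a product of two continuous functionals.

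The extension then proceeds in two steps via Theorem \ref{dense}. Fixing $g\in C_c(K\setminus G/K)$, the map $f\mapsto\Phi(f,g)$ is continuous and vanishes on the dense set $C_c(K\setminus G/K)$, hence vanishes for every $f\in\mathcal{L}^{1}_{\omega}(K\setminus G/K)$; fixing now such an $f$ and letting $g$ range over $\mathcal{L}^{1}_{\omega}(K\setminus G/K)$, the same density argument forces $\Phi(f,g)=0$ throughout. As $\varphi\in S^{b}_{\omega}(G,K)$ was arbitrary, this yields (\ref{Fourier}).

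I would expect the only genuinely delicate point to be the bookkeeping of continuity together with the two successive density passages. A direct alternative, avoiding density altogether, is to repeat verbatim the computation of Theorem \ref{sphe}, the Fubini step there being justified for general $f,g\in\mathcal{L}^{1}_{\omega}(K\setminus G/K)$ by the estimate $\int_{G}\int_{G}|f(y)||g(y^{-1}x)|\omega(y)\omega(y^{-1}x)\,dy\,dx=\|f\|_{1,\omega}\|g\|_{1,\omega}<\infty$, obtained after the substitution $x\mapsto yx$, together with the boundedness of $\mathcal{M}_{\omega}\varphi$.
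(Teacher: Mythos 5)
Your argument is correct, but it follows a genuinely different route from the paper. The paper proves the identity by direct computation: it expands $\mathcal{F}^{\omega}(f\ast_{\omega}g)(\varphi)$ as a double integral, substitutes $s=y^{-1}x$, inserts an average over $K$ using the bi-invariance of the integrand, invokes the functional equation $\int_{K}\mathcal{M}_{\omega}\varphi(s^{-1}ky^{-1})\,dk=\mathcal{M}_{\omega}\varphi(s^{-1})\mathcal{M}_{\omega}\varphi(y^{-1})$ of Theorem \ref{sphe}, and factors the result into $\mathcal{F}^{\omega}(f)(\varphi)\mathcal{F}^{\omega}(g)(\varphi)$ --- essentially the computation you sketch only as your ``direct alternative'' at the end, and without an explicit Fubini justification. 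Your main argument instead observes that $\mathcal{F}^{\omega}(\cdot)(\varphi)=\chi_{\varphi,\omega}$ is already multiplicative on $C_{c}(K\setminus G/K)$ by the very definition of an $\omega$-spherical function, and upgrades this to all of $\mathcal{L}^{1}_{\omega}(K\setminus G/K)$ by continuity of the functional (via boundedness of $\mathcal{M}_{\omega}\varphi$), submultiplicativity of $\|\cdot\|_{1,\omega}$, and two successive applications of the density result of Theorem \ref{dense}. What your route buys is a clean separation of the algebraic content (already packaged in the definition of $S^{b}_{\omega}(G,K)$) from the analytic extension, and it makes explicit why the statement holds on all of $\mathcal{L}^{1}_{\omega}(K\setminus G/K)$ rather than just on $C_{c}(K\setminus G/K)$; what the paper's direct computation buys is self-containedness and no reliance on the density theorem. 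Your closing remark supplying the integrability estimate $\int_{G}\int_{G}|f(y)||g(y^{-1}x)|\omega(y)\omega(y^{-1}x)\,dy\,dx=\|f\|_{1,\omega}\|g\|_{1,\omega}$ is exactly the point the paper leaves implicit, so either of your two routes closes that small gap.
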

\begin{proof}
Let $\varphi \in S^b_{\omega}(G,K).$
	\begin{align*}
\mathcal{F}^{\omega}(f\ast_{\omega}g)(\varphi) & =  \int_{G}\mathcal{M}_{\omega}(f\ast_{\omega}g)(x)\mathcal{M}_{\omega}\varphi(x^{-1})dx\\
& =  \int_{G}\int_{G}f(y)g(y^{-1}x)\mathcal{M}_{\omega}\varphi(x^{-1})w(y)w(y^{-1}x)dxdy.
	\end{align*}
	By the change of variable  $s \longrightarrow y^{-1}x$,  we have
\begin{align*}
\mathcal{F}^{\omega}(f\ast_{\omega}g)(\varphi) & =  \int_{G}\int_{G}f(y)g(s)\mathcal{M}_{\omega}\varphi(s^{-1}y^{-1})w(y)w(s)dsdy\\
&= \int_{G}\int_{G}f(y)g(s)\left(\int_{K} \mathcal{M}_{\omega}\varphi(s^{-1}ky^{-1})dk\right) w(y)w(s)dsdy\\
&=  \int_{G}\int_{G}f(y)g(s)\left( \mathcal{M}_{\omega}\varphi(s^{-1})\mathcal{M}_{\omega}\varphi(y^{-1})\right) w(y)w(s)dsdy\\
& =  \left(  \int_{G}f(y)w(y)\mathcal{M}_{\omega}\varphi(y^{-1})dy\right) \left(\int_{G}g(s)w(s) \mathcal{M}_{\omega}\varphi(s^{-1})ds\right) \\
& =   \left(  \int_{G}\mathcal{M}_{\omega}f(y)\mathcal{M}_{\omega}\varphi(y^{-1})dy\right) \left(\int_{G}\mathcal{M}_{\omega}g(s) \mathcal{M}_{\omega}\varphi(s^{-1})ds\right) \\
& =  \mathcal{F}^{\omega}(f)(\varphi)\mathcal{F}^{\omega}(g)(\varphi).
\end{align*}	
Thus, $\mathcal{F}^{\omega}(f\ast_{\omega}g)  =   \mathcal{F}^{\omega}(f)\mathcal{F}^{\omega}(g)$.
\end{proof}

\begin{theorem}\label{injective}
	The weighted spherical Fourier transform is injective on $\mathcal{L}_{\omega}^{1}(K\setminus G/K)$. This is equivalent to say that the Banach algebra $\mathcal{L}_{\omega}^{1}(K\setminus G/K)$ is semisimple.
\end{theorem}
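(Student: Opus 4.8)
The plan is to identify $\mathcal{F}^{\omega}$ with the Gelfand transform of the commutative Banach algebra $A:=\mathcal{L}^{1}_{\omega}(K\setminus G/K)$, and then to transport the whole question back to the classical Gelfand pair $(G,K)$ by means of the multiplication operator $\mathcal{M}_{\omega}$. First I would dispose of the equivalence that is built into the statement. Since $(G,K,\omega)$ is a weighted Gelfand pair, $A$ is a commutative Banach algebra, and by the preceding theorem every non-zero character of $A$ is of the form $f\mapsto\int_{G}\mathcal{M}_{\omega}f(x)\mathcal{M}_{\omega}\varphi(x^{-1})dx$ with $\varphi\in S^{b}_{\omega}(G,K)$. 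Hence the character space of $A$ is parametrized by $S^{b}_{\omega}(G,K)$ and $\mathcal{F}^{\omega}$ is exactly the Gelfand transform of $A$. Because the kernel of the Gelfand transform of a commutative Banach algebra equals its Jacobson radical $\mathrm{rad}(A)=\bigcap_{\chi}\ker\chi$, injectivity of $\mathcal{F}^{\omega}$ is by definition equivalent to $\mathrm{rad}(A)=\{0\}$, that is, to semisimplicity of $A$. This settles the stated equivalence and leaves only the injectivity to be proved.

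For the injectivity itself, the key observation is that $\mathcal{M}_{\omega}\colon f\mapsto\omega f$ is an isometric algebra isomorphism of $(A,\ast_{\omega},\|\cdot\|_{1,\omega})$ onto the classical convolution algebra $(L^{1}(K\setminus G/K),\ast,\|\cdot\|_{1})$. Indeed, a direct computation gives $\mathcal{M}_{\omega}(f\ast_{\omega}g)=\mathcal{M}_{\omega}f\ast\mathcal{M}_{\omega}g$, while $\|\mathcal{M}_{\omega}f\|_{1}=\|f\|_{1,\omega}$, and surjectivity onto $L^{1}(K\setminus G/K)$ follows from $F\mapsto F/\omega$, which is well defined because $\omega>0$ and $\omega$ is $K$-bi-invariant. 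Under this isomorphism the correspondence $\varphi\leftrightarrow\Phi:=\mathcal{M}_{\omega}\varphi$ carries $\omega$-spherical functions to non-zero continuous $K$-bi-invariant functions satisfying $\int_{K}\Phi(xky)dk=\Phi(x)\Phi(y)$, i.e. to the ordinary spherical functions of the Gelfand pair $(G,K)$, with bounded ones going to bounded ones; moreover it carries $\mathcal{F}^{\omega}(f)(\varphi)$ to the classical spherical Fourier transform $\int_{G}\mathcal{M}_{\omega}f(x)\,\Phi(x^{-1})dx$ of $\mathcal{M}_{\omega}f$.

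Consequently $\mathcal{F}^{\omega}$ is injective if and only if the classical spherical Fourier transform is injective on $L^{1}(K\setminus G/K)$. The latter is the well-known semisimplicity of the bi-invariant convolution algebra of a Gelfand pair, which itself descends from the semisimplicity of $L^{1}(G)$ (equivalently, from the fact that the bounded spherical functions separate the points of $L^{1}(K\setminus G/K)$). I would therefore finish by invoking that classical fact and transporting it back through $\mathcal{M}_{\omega}$: if $\mathcal{F}^{\omega}(f)=0$ then the classical transform of $\mathcal{M}_{\omega}f$ vanishes, whence $\mathcal{M}_{\omega}f=0$ and, since $\mathcal{M}_{\omega}$ is injective, $f=0$. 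The main obstacle is precisely this last input, namely the semisimplicity of $L^{1}(K\setminus G/K)$; one must either cite it or reprove it (for instance by restricting characters of $L^{1}(G)$), since everything else in the argument is the routine verification that $\mathcal{M}_{\omega}$ is an isometric isomorphism intertwining the two transforms.
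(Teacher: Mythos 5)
Your proposal is correct and follows essentially the same route as the paper: both arguments transport the problem through multiplication by $\omega$ (i.e.\ the correspondences $f\leftrightarrow\omega f$ and $\varphi\leftrightarrow\omega\varphi$) to the classical spherical Fourier transform on $L^{1}(K\setminus G/K)$ and then invoke the semisimplicity of that algebra. Your write-up is somewhat more careful in that it explicitly verifies $\mathcal{M}_{\omega}$ is an algebra isomorphism intertwining the two transforms and justifies the equivalence with semisimplicity via the Gelfand transform, but the underlying idea is identical.
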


\begin{proof}
	The spherical Fourier  transform of a function $f\in L^{1}(K\setminus G/K)$ is defined by $$
	\mathcal{F}(f)(\Phi)=\displaystyle\int_Gf(x)\Phi(x^{-1})dx,\,  \Phi \in S^b(G,K),$$
	where $S^b(G,K)$ denotes the set of bounded spherical functions for the Gelfand pair $(G,K)$.
	Let $f \in \mathcal{L}^{1}_{\omega}(K\setminus G/K)$ be such that $\mathcal{F}^{\omega}(f)(\varphi)= 0,\,  \forall \varphi \in S^{b}_{\omega}(G,K)$.  It is clear that $$f \in \mathcal{L}^{1}_{\omega}(K\setminus G/K) \Longleftrightarrow \omega f\in L^1(K\setminus G/K)    $$
	$$\varphi \in S^{b}_{\omega}(G,K) \Longleftrightarrow \omega \varphi\in S(G,K).$$ 
	Then, 
	 \begin{align*}
	\mathcal{F}^{\omega}(f)(\varphi)= 0 &\Longrightarrow \mathcal{F}(\omega f)(\omega\varphi)=0  \\
	&\Longrightarrow \omega f=0, \mbox{ since } L^1(K\setminus G/K) \mbox{ is semisimple} \\
	&\Longrightarrow f=0.	
	\end{align*} 
	Thus, $\mathcal{F^{\omega}}$ is injective.
\end{proof}

\subsection{Multipliers for weighted  Gelfand pairs}\label{Multipliers for weighted  Gelfand pairs}
Our main goal in this section is to characterize the multipliers for the weighted Gelfand pairs $(G,K,\omega)$.

\begin{definition}
	A multiplier for the weighted Gelfand pair $(G,K,\omega)$ is a linear map $ T: \mathcal{L}^{1}_{\omega}(K \setminus G/ K) \rightarrow \mathcal{L}^{1}_{\omega}(K \setminus G/ K)$ such that  
	$$\forall f, g \in \mathcal{L}^{1}_{\omega}(K \setminus G/ K),\,T(f \ast_{\omega} g) = Tf\ast_{\omega} g.$$
	\end{definition}

Let us denote by $M( \mathcal{L}^{1}_{\omega}(K \setminus G/K))$ the set of  multipliers of the weighted Gelfand pair $(G,K,\omega)$. 

\begin{theorem}
	Let $(G,K,\omega)$ be a weighted Gelfand pair. 
	
	Let $ T: \mathcal{L}^{1}_{\omega}(K \setminus G/ K) \rightarrow \mathcal{L}^{1}_{\omega}(K \setminus G/ K)$ be a a linear map. Then, the following  assertions are equivalent.
	\begin{enumerate}
		\item\label{ab} $T$ is a multiplier for $(G,K,\omega)$.
		\item\label{ac}  There exists a unique function $\mathfrak{b}$ defined on $S^{b}_{\omega}(G,K)$ such that 
	$$\forall f \in \mathcal{L}^{1}_{\omega}(K \setminus G/ K),\, \mathcal{F}^{\omega}(Tf) = \mathfrak{b}\mathcal{F}^{\omega}(f).$$
\end{enumerate}
\end{theorem}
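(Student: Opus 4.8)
The plan is to pivot both implications on two facts already established: the convolution identity \eqref{Fourier}, which turns $\ast_\omega$ into pointwise multiplication under $\mathcal{F}^\omega$, and the injectivity of the weighted spherical Fourier transform (Theorem \ref{injective}). Together these say that $\mathcal{F}^\omega$ realizes $\mathcal{L}^1_\omega(K\setminus G/K)$ as a point-separating algebra of functions on $S^b_\omega(G,K)$, and the whole argument is carried out on that function side.

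I would begin with the easier direction \eqref{ac}$\Rightarrow$\eqref{ab}. Assuming $\mathcal{F}^\omega(Tf)=\mathfrak{b}\,\mathcal{F}^\omega(f)$, I evaluate at an arbitrary $\varphi\in S^b_\omega(G,K)$ and apply \eqref{Fourier} once to each side, finding that both $\mathcal{F}^\omega(T(f\ast_\omega g))(\varphi)$ and $\mathcal{F}^\omega(Tf\ast_\omega g)(\varphi)$ reduce to $\mathfrak{b}(\varphi)\,\mathcal{F}^\omega(f)(\varphi)\,\mathcal{F}^\omega(g)(\varphi)$. Since these agree at every $\varphi$ and $\mathcal{F}^\omega$ is injective by Theorem \ref{injective}, I conclude $T(f\ast_\omega g)=Tf\ast_\omega g$, so $T$ is a multiplier.

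For \eqref{ab}$\Rightarrow$\eqref{ac}, I first exploit that $(G,K,\omega)$ is a weighted Gelfand pair, so $\ast_\omega$ is commutative; combined with the multiplier identity this gives $Tf\ast_\omega g=T(f\ast_\omega g)=T(g\ast_\omega f)=Tg\ast_\omega f=f\ast_\omega Tg$. Applying $\mathcal{F}^\omega$ and \eqref{Fourier} then yields the key pointwise relation
$$\mathcal{F}^\omega(Tf)(\varphi)\,\mathcal{F}^\omega(g)(\varphi)=\mathcal{F}^\omega(f)(\varphi)\,\mathcal{F}^\omega(Tg)(\varphi)$$
for all $f,g$ and all $\varphi$. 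Because each $\varphi\in S^b_\omega(G,K)$ induces the nonzero character $\chi_{\varphi,\omega}=\mathcal{F}^\omega(\cdot)(\varphi)$, for every $\varphi$ there is some $g_\varphi$ with $\mathcal{F}^\omega(g_\varphi)(\varphi)\neq 0$, and I define $\mathfrak{b}(\varphi)=\mathcal{F}^\omega(Tg_\varphi)(\varphi)/\mathcal{F}^\omega(g_\varphi)(\varphi)$. The displayed relation shows this ratio is independent of the chosen $g_\varphi$, and it also forces $\mathcal{F}^\omega(Tf)(\varphi)=0$ whenever $\mathcal{F}^\omega(f)(\varphi)=0$, so that $\mathcal{F}^\omega(Tf)=\mathfrak{b}\,\mathcal{F}^\omega(f)$ holds on all of $S^b_\omega(G,K)$. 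Uniqueness of $\mathfrak{b}$ is immediate, since at each $\varphi$ some $f$ has $\mathcal{F}^\omega(f)(\varphi)\neq 0$.

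The main obstacle is precisely the well-definedness of $\mathfrak{b}$ in this last direction: I must check both that the defining ratio is insensitive to the auxiliary function and that it extends consistently across the locus where $\mathcal{F}^\omega(f)(\varphi)$ vanishes. The nonvanishing of the characters $\chi_{\varphi,\omega}$ associated with points of $S^b_\omega(G,K)$ is exactly what rescues these points, so I would record carefully that $\mathcal{F}^\omega(\cdot)(\varphi)$ is a nonzero functional for each $\varphi$ before invoking it.
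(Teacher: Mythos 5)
Your proof is correct and follows essentially the same route as the paper's: both implications pivot on the convolution identity and on the injectivity of the weighted spherical Fourier transform, and in the forward direction you use commutativity of $\ast_{\omega}$ to obtain $\mathcal{F}^{\omega}(Tf)\,\mathcal{F}^{\omega}(g)=\mathcal{F}^{\omega}(Tg)\,\mathcal{F}^{\omega}(f)$ and then divide, exactly as the paper does. The one point where you improve on the paper is the definition of $\mathfrak{b}$: the paper fixes a single $g$ with $\mathcal{F}^{\omega}(g)(\varphi)\neq 0$ for \emph{all} $\varphi$ simultaneously (whose existence it does not justify), whereas your pointwise choice of $g_{\varphi}$, together with the well-definedness check, only requires each functional $\chi_{\varphi,\omega}=\mathcal{F}^{\omega}(\cdot)(\varphi)$ to be nonzero, which is the more honest version of the same argument.
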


\begin{proof}
 We mimic the proof from the Proposition 4.2 of \cite{Issa 1}.
 
 \begin{enumerate}
\item \ref{ab} $\Longrightarrow$ \ref{ac} Assume that $T(f\ast_{\omega}g) = Tf \ast_{\omega}g$ for all $f, g \in \mathcal{L}^{1}_{\omega}(K \setminus G/ K).$  Since $(G, K,\omega)$ is a weighted Gelfand pair, then $\mathcal{L}^{1}_{\omega}(K \setminus G/ K)$ is commutative under the convolution product $\ast_{\omega}$. Therefore, we have $$ Tf\ast_{\omega}g = T(f\ast_{\omega}g) = T(g \ast_{\omega}f ) = Tg \ast_{\omega}f.$$ 
	Using the property of the spherical Fourier transform (\ref{Fourier}) we obtain $$ \mathcal{F}^{\omega}(Tf) \times \mathcal{F}^{\omega}(g) = \mathcal{F}^{\omega}(Tg) \times \mathcal{F}^{\omega}(f).$$
	Let us choose $g$ in $\mathcal{L}^{1}_{\omega}(K \setminus G/ K)$ such that for all weighted spherical functions $\varphi$,  $\mathcal{F}^{\omega}(g)(\varphi) \neq 0$. Now, we define $\mathfrak{b}$ by $ \mathfrak{b} (\varphi) = \dfrac{\mathcal{F}^{\omega}(Tg)(\varphi)}{\mathcal{F}^{\omega}(g)(\varphi) }$. Therefore, we have $\mathcal{F}^{\omega}(Tf)(\varphi) = \mathfrak{b}(\varphi)\mathcal{F}^{\omega}(f)(\varphi)$. Thus, $\mathcal{F}^{\omega}(Tf) = \mathfrak{b}\mathcal{F}^{\omega}(f).$ 
	
	Let us  show the unicity of $\mathfrak{b}$. If $\mathfrak{r}$ is a second function on $S_{\omega}^b(G,K)$ such that $ \mathcal{F}^{\omega}(Tf) = \mathfrak{r}\mathcal{F}^{\omega}(f)= \mathfrak{b}\mathcal{F}^{\omega}(f)$ for all $f\in \mathcal{L}^{1}_{\omega}(K \setminus G/ K)$. Then, the equation $ (\mathfrak{b} - \mathfrak{r})\mathcal{F}^{\omega}(f) =0$ for all $f\in \mathcal{L}^{1}_{\omega}(K \setminus G/ K)$ reveals that $ \mathfrak{b} = \mathfrak{r}$.
	
\item \ref{ac} $\Longrightarrow$ \ref{ab} Let us assume that there exists a function $\mathfrak{b}$ defined on $S_{\omega}^b(G,K)$ such that $\mathcal{F}^{\omega}(Tf) = \mathfrak{b}\mathcal{F}^{\omega}(f),\,\forall f\in \mathcal{L}^{1}_{\omega}(K \setminus G/ K)$. For $f,g \in \mathcal{L}^{1}_{\omega}(K \setminus G/ K)$, we have $f\ast_{\omega} g\in\mathcal{L}^{1}_{\omega}(K \setminus G/ K)$. Applying the hypothesis, we have  $\mathcal{F}^{\omega}({T(f\ast_{\omega} g)}) = \mathfrak{b}\mathcal{F}^{\omega}{(f \ast_{\omega} g)} = \mathfrak{b} \mathcal{F}^{\omega}(f)\mathcal{F}^{\omega}(g) =\mathcal{F}^{\omega}(Tf)\mathcal{F}^{\omega}(g) = \mathcal{F}^{\omega}({T(f)\ast_{\omega} g})$. It follows that $T(f \ast_{\omega} g) = Tf \ast_{\omega}g$ since the weighted spherical Fourier transform is injective (Theorem\ref{injective}). Therefore, $T \in M( \mathcal{L}^{1}_{\omega}(K \setminus G/K))$.
\end{enumerate}
\end{proof}
\begin{theorem}
	Let $(G,K,\omega)$ be a weighted Gelfand pair. Let $T, T' \in M( \mathcal{L}^{1}_{\omega}(K \setminus G/K))$. Then, 
 $$\forall f,g \in \mathcal{L}^{1}_{\omega}(K \setminus G/ K),\,T{f}\ast_{\omega}T'{g} = T'{f}\ast_{\omega}T{g}.$$
\end{theorem}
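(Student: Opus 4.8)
The plan is to reduce the identity to a statement about scalar-valued functions on $S^b_\omega(G,K)$ via the weighted spherical Fourier transform, and then invoke its injectivity. First I would apply the preceding characterization of multipliers: since $T,T'\in M(\mathcal{L}^1_\omega(K\setminus G/K))$, there exist (unique) functions $\mathfrak{b}$ and $\mathfrak{b}'$ defined on $S^b_\omega(G,K)$ with $\mathcal{F}^\omega(Tf)=\mathfrak{b}\,\mathcal{F}^\omega(f)$ and $\mathcal{F}^\omega(T'f)=\mathfrak{b}'\,\mathcal{F}^\omega(f)$ for every $f\in\mathcal{L}^1_\omega(K\setminus G/K)$. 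Note that $Tf,\,T'g,\,T'f,\,Tg$ all lie in $\mathcal{L}^1_\omega(K\setminus G/K)$ and that this space is closed under $\ast_\omega$, so every convolution appearing below is a legitimate element of the algebra and has a well-defined weighted spherical Fourier transform.

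Next I would compute $\mathcal{F}^\omega(Tf\ast_\omega T'g)$ using the convolution theorem \eqref{Fourier}, obtaining $\mathcal{F}^\omega(Tf)\,\mathcal{F}^\omega(T'g)=\mathfrak{b}\,\mathfrak{b}'\,\mathcal{F}^\omega(f)\,\mathcal{F}^\omega(g)$, where all products are pointwise products of $\mathbb{C}$-valued functions on $S^b_\omega(G,K)$. Symmetrically, $\mathcal{F}^\omega(T'f\ast_\omega Tg)=\mathfrak{b}'\,\mathfrak{b}\,\mathcal{F}^\omega(f)\,\mathcal{F}^\omega(g)$. Since pointwise multiplication of scalar functions is commutative, $\mathfrak{b}\mathfrak{b}'=\mathfrak{b}'\mathfrak{b}$, and the two transforms agree at every $\varphi\in S^b_\omega(G,K)$. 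Finally I would conclude by injectivity: by Theorem~\ref{injective} the weighted spherical Fourier transform is injective on $\mathcal{L}^1_\omega(K\setminus G/K)$, so $\mathcal{F}^\omega(Tf\ast_\omega T'g)=\mathcal{F}^\omega(T'f\ast_\omega Tg)$ forces $Tf\ast_\omega T'g=T'f\ast_\omega Tg$.

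As for where the content sits, there is no genuine analytic obstacle here: the heavy lifting was already done in establishing the multiplier characterization (which encodes each multiplier as pointwise multiplication by a symbol) together with the injectivity/semisimplicity result. The only point requiring care is the bookkeeping that legitimizes each step, namely that $T$ and $T'$ map the algebra into itself so that the characterization applies to each of $Tf,Tg,T'f,T'g$, and that the commutativity of the weighted Gelfand pair $(G,K,\omega)$ is exactly what guarantees the symbols $\mathfrak{b},\mathfrak{b}'$ exist in the first place.

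An alternative, transform-free route would use only the defining multiplier identity $T(f\ast_\omega g)=Tf\ast_\omega g$ together with commutativity of $\ast_\omega$ to derive the \emph{symmetric} rule $Tf\ast_\omega g=f\ast_\omega Tg$, and then rewrite both sides of the claim as $f\ast_\omega TT'g$ and $f\ast_\omega T'Tg$ respectively; this reduces the statement to the commuting of the composed operators $TT'$ and $T'T$. However, verifying that commuting again rests on semisimplicity, so the Fourier-analytic argument above is the cleaner one and is the approach I would write up.
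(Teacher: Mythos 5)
Your argument is correct, but it is not the route the paper takes. The paper's proof is a purely algebraic chain of identities using only the defining multiplier property $T(u\ast_{\omega}v)=Tu\ast_{\omega}v$ and the commutativity of $\ast_{\omega}$ on $\mathcal{L}^{1}_{\omega}(K\setminus G/K)$: it writes $Tf\ast_{\omega}T'g=T(f\ast_{\omega}T'g)=T(T'g\ast_{\omega}f)=T(T'(g\ast_{\omega}f))=T(T'(f\ast_{\omega}g))=T(T'f\ast_{\omega}g)=T(g\ast_{\omega}T'f)=Tg\ast_{\omega}T'f$, and then commutes once more. Note in particular that this direct computation never needs the composed operators $TT'$ and $T'T$ to commute, so your worry in the final paragraph --- that the transform-free route ``again rests on semisimplicity'' --- is unfounded; with the right ordering of the manipulations the elementary argument is entirely self-contained. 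Your Fourier-analytic proof is valid as well, but it imports two nontrivial prior results (the symbol characterization of multipliers and Theorem~\ref{injective}), and in doing so it inherits the delicate point in the characterization theorem's proof, namely the existence of a $g$ with $\mathcal{F}^{\omega}(g)(\varphi)\neq 0$ for every $\varphi\in S^{b}_{\omega}(G,K)$. What your approach buys is conceptual transparency (multipliers become commuting pointwise multiplications by symbols); what the paper's approach buys is economy, since it uses nothing beyond the definition of a multiplier and the commutativity built into the notion of a weighted Gelfand pair.
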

\begin{proof}
	\begin{align*}
	T{f}\ast_{\omega}T'{g} & =  T(f \ast_{\omega}(T'{g})) \\
	 & = T((T'{g}) \ast_{\omega}f)\\
	 & =  T(T'(g \ast_{\omega}f))\\
	 &= T(T'(f \ast_{\omega}g))\\
	 & = T((T'{f})\ast_{\omega}g)\\
	 & =  T(g\ast_{\omega} (T'{f}))\\
	 &=  T{g} \ast_{\omega} T'{f}.\\
	 T{f}\ast_{\omega}T'{g}  &= T'{f} \ast_{\omega}T{g}.
	\end{align*}
\end{proof}

  \section{Conclusion}
In this article, the notions of weighted Gelfand pair, weighted spherical function and weighted Fourier transform on  the convolution algebra $\mathcal{L}^{1}_{\omega}(K \setminus G/ K)$   are introduced. Some of their main properties are studied. Moreover, multipliers of $\mathcal{L}^{1}_{\omega}(K \setminus G/ K)$ are characterized by the means of the weighted Fourier transform.
  \section*{Competing Interests}
The authors declare that no competing interests exist.

\centerline{}
\centerline{}
\begin{small}

 A. Y. Tissinam : Department of Mathematics, University of Lom\'e, 1 PoBox 1515 Lom\'e 1, Togo\\
 asseketis@gmail.com
 \centerline{ }
 \centerline{ }
A. Issa : Department of Mathematics, University of Lom\'e, 1 PoBox 1515 Lom\'e 1, Togo\\
 issaabudulai13@gmail.com
 \centerline{ }
 \centerline{ }
 Y.  Mensah : Department of Mathematics, University of Lom\'e, 1 PoBox 1515 Lom\'e 1, Togo\\
 and International Chair in Mathematical Physics (ICMPA)-Unesco Chair, University of Abomey-Calavi, Benin\\
mensahyaogan2@gmail.com,  ymensah@univ-lome.org
\end{small}
\end{document}